\newtheorem{lm}{Lemma}[section]
\newtheorem{teo}[lm]{Theorem}
\newtheorem{prop}[lm]{Proposition}
\newtheorem{coro}[lm]{Corollary}
\theoremstyle{definition}
\newtheorem{oss}[lm]{Remark}
\newtheorem*{ack}{Acknowledgments}
\numberwithin{equation}{section}
\title[An inequality \`a la Szeg\H{o}-Weinberger on convex sets]{An inequality \`a la Szeg\H{o}-Weinberger for the $p-$Laplacian on convex sets}
\author[Brasco]{Lorenzo Brasco}
\author[Nitsch]{Carlo Nitsch}
\author[Trombetti]{Cristina Trombetti}
\address[L. Brasco]{Aix-Marseille Universit\'e, CNRS
	\newline\indent
	Centrale Marseille, I2M, UMR 7373, 39 Rue Fr\'ed\'eric Joliot Curie
	\newline\indent
	13453 Marseille, France}
\email{lorenzo.brasco@univ-amu.fr}
\address[C. Nitsch \& C. Trombetti]{Dipartimento di Matematica e Applicazioni ``R.Caccioppoli''
\newline\indent 
Complesso Universitario di Monte S. Angelo, Via Cintia
\newline\indent 
80126 Napoli, Italy}
\email{c.nitsch@unina.it}
\email{cristina@unina.it}
\begin{document}

\keywords{Nonlinear eigenvalue problems, shape optimization}
\subjclass[2010]{35P30, 47A75, 34B15}

\begin{abstract}
In this paper we prove a sharp upper bound for the first nontrivial eigenvalue of the $p-$Laplacian with Neumann boundary conditions. This applies to convex sets with given diameter. Some variants and extensions are investigated as well.
\end{abstract}

\maketitle

\begin{center}
	\begin{minipage}{11cm}
		\small
		\tableofcontents
	\end{minipage}
\end{center}

\section{Introduction}

\subsection{Overview}
Given an open  bounded connected Lipschitz set $\Omega\subset\mathbb{R}^N$ and an exponent $1<p<\infty$, we consider  $\lambda_p(\Omega)$ and $\mu_p(\Omega)$ the first nontrivial eigenvalues of the $p-$Laplace operator with Dirichlet and Neumann boundary conditions, respectively. We remind that these can be variationally characterized by
\[
\lambda_p(\Omega)=\min_{v\in W^{1,p}_0(\Omega)}\left\{\int_\Omega |\nabla v|^p\, dx\,:\, \int_\Omega |v|^p\, dx=1\right\},
\]
and
\[
\mu_{p}(\Omega)=\min_{v\in W^{1,p}(\Omega)}\left\{\int_\Omega |\nabla v|^p\, dx\, :\, \int_\Omega |v|^p\, dx=1 \mbox{ and }\int_\Omega |v|^{p-2}\, v\, dx=0\right\}.
\]
Since exact values of such quantities are known only for specific values of $p$ and special domains $\Omega$, it is important to give (sharp) estimates for these quantities 
in terms of (simple) geometric quantities such as measure, perimeter, diameter, relative isoperimetric constants and so on. In this direction, the reader could consult  
for instance \cite{A, AM, BCT, BCT2, FNT} and the references therein.
\par
With this respect the most celebrated example is the {\it Faber--Krahn inequality} (see \cite[Chapter 3]{He} for example) which asserts that
the following minimization problem
\begin{equation}
\label{FKmin}
\inf\{\lambda_{p}(\Omega)\, :\, |\Omega|\le c\},
\end{equation}
is (uniquely) solved by
$N-$dimensional balls of measure $c$. By taking advantage of the homegeneity properties of the functional $\Omega\mapsto \lambda_p(\Omega)$, the previous can be summarized as
\begin{equation}
\label{FK}
\lambda_p(\Omega)\ge \lambda_p(B)\,\left(\frac{|B|}{|\Omega|}\right)^\frac{p}{N},
\end{equation}
where $B$ is now any $N-$dimensional ball. Then $\lambda_p(\Omega)$ can be bounded from below in a sharp way just in terms the measure of the set $\Omega$. We point out that by using {\it isoperimetric inequality} or {\it isodiametric inequality}, from \eqref{FK} we can infer similar lower bounds for $\lambda_p$ in terms of the perimeter or the diameter of $\Omega$.  
\par
Observe that problem \eqref{FKmin} becomes trivial, when we replace $\lambda_p(\Omega)$ with $\mu_p(\Omega)$. Indeed, the latter is actually zero each time $\Omega$ is disconnected.
It turns out that the natural counterpart for $\mu_{p}$ is rather the maximization problem, i.e.
\begin{equation}
\label{SW}
\sup\{\mu_{p}(\Omega)\, :\, |\Omega|\ge c\}.
\end{equation}
Again, this is generally expected to be solved by $N-$dimensional balls of volume $c$. Unfortunately so far this problem has resisted all the attempts to be attacked with the unique exception of the case $p=2$ (and partially of the limiting cases $p=1$ and $p=\infty$, see \cite{EFKNT} and \cite{EKNT, RS} respectively). The {\it Szeg\H{o}--Weinberger inequality} \cite {Sz, We} states in fact that for $p=2$ problem \eqref{SW} is (uniquely) solved by $N-$dimensional balls of measure $c$. 
As before, the result can be rewritten in scaling invariant form as \begin{equation}
\label{SWscaling}
 \mu_{2}(\Omega)\le \mu_{2}(B) \left(\frac{|B|}{|\Omega|}\right)^{\frac{2}{N}},
\end{equation}
with equality holding if and only if $\Omega$ is an $N-$dimensional ball. We recall that the proof of \eqref{SW} for $p=2$ crucially exploits some pecularities of the Laplacian, like linearity and the knowledge of the explicit form of eigenfunctions on balls. 
\vskip.2cm
A couple of comments on the Szeg\H{o}-Weinberger result are in order. First of all, inequality \eqref{SWscaling} says that $\mu_2(\Omega)$ can be estimated {\it from above} just in terms of the measure of $\Omega$. But differently from the case $\lambda_p$, now {\it we can not directly infer} similar upper bounds for $\mu_2(\Omega)$ in terms of perimeter or diameter. Then one may wonder whether such a kind of estimates hold true or not for every $1<p<\infty$, at least for some particular classes of sets.
\par
Secondly, we notice that
if $B$ is any $N-$dimensional ball, using the fact that $\mu_{2}(B) < \lambda_{2}(B)$ (see \cite{He} or Proposition \ref{prop:nodal} below), from \eqref{SWscaling} we can infer
\begin{equation}
\label{mild}
 \mu_{2}(\Omega)\le \lambda_{2}(B) \left(\frac{|B|}{|\Omega|}\right)^{\frac{2}{N}},
\end{equation}
which can be see as a weak version of the Szeg\H{o}-Weinberger inequality. Again, a natural question is whether inequality \eqref{mild} can be extended to the case of $p\not=2$ or not.
\par
The last two questions are the starting point of our analysis. In this paper we prove indeed sharp upper bounds on $\mu_p(\Omega)$ in terms of diameter, as well as generalizations of \eqref{mild} for $p \neq 2$, under the additional constraint that $\Omega$ is a convex set. 
\subsection{A sharp upper bound} 
Then our main scope is to investigate the following shape optimization problem with convexity and diameter constraints
\begin{equation}
\label{mild2}
\mu^*:=\sup\Big\{\mu_{p}(\Omega)\, :\, \Omega\subset\mathbb{R}^N \mbox{ convex},\, \mathrm{diam}(\Omega)\ge 1\Big\}.
\end{equation}
Of course, by homogeneity of the quantities involved the value $1$ has no bearing and could be replaced by any constant $c>0$.
In Theorem \ref{p_cop} we show that the previous upper bound is finite, then we compute it and show at the same time that {\it this problem does not admit a solution}. Notably, we show that for every admissible set $\Omega$ there holds
\[
\mu_{p}(\Omega)<\mu^*,
\]
and we find a sequence $\{\Omega_n\}_{n\in\mathbb{N}}\subset\mathbb{R}^N$ of convex sets degenerating to a segment such that
\[
\lim_{n\to\infty} \mu_{p}(\Omega_n)=\mu^*.
\] 
We refer to Section \ref{sec:3} for more details.
As we will show, the previous result can be summarized by the following scaling invariant sharp inequality
\begin{equation}
\label{maestro}
\mu_{p}(\Omega)<\lambda_{p}(B)\,\left(\frac{\mathrm{diam}(B)}{\mathrm{diam}(\Omega)}\right)^{p},
\end{equation}
where $B$ is any $N-$dimensional ball and inequality sign is strict. The proof of \eqref{maestro} is based on a clever choice of a special test function which reminds ideas exploited in \cite{Ch}.
By joining \eqref{maestro} and the {\it isodiametric inequality}, we immediately get (see Corollary \ref{coro:weaksw} below)
\[
\mu_{p}(\Omega)<\lambda_{p}(B)\,\left(\frac{|B|}{|\Omega|}\right)^{\frac{p}{N}},
\]
which generalizes \eqref{mild} to $p\not=2$ for convex sets, as announced above. By keeping in mind the way such an estimate was proved for $p=2$, the previous can be seen as {\it the trace of a potentially existing Szeg\H{o}-Weinberger inequality for the $p-$Laplacian}. 
\par
For ease of completeness and in order to neatly motivate some of the studies performed in this paper, it is useful to recall at this point that the {\it minimization} problem
\begin{equation}
\label{mild3}
\inf\Big\{\mu_{p}(\Omega)\, :\, \Omega\subset\mathbb{R}^N \mbox{ convex},\, \mathrm{diam}(\Omega)\le 1\Big\},
\end{equation}
highlights the same features as problem \eqref{mild2}. For example, here as well the infimum can be computed and is not attained. More interestingly, a minimizing sequence is again given by a family of convex sets collapsing on a segment. For $p=2$ this is a celebrated result by Payne and Weinberger (see \cite{PW}), recently generalized in \cite{ENT,FNT} and \cite{Va} to $p\not=2$. The result can be summarized by the sharp inequality
\begin{equation}
\label{mild4}
\mu_{p}(\Omega)> \left(\frac{\pi_p}{\mathrm{diam}(\Omega)}\right)^p,
\end{equation}
where the constant $\pi_p$ is defined by
\begin{equation}
\label{pip}
\pi_p=2\,\int_0^{(p-1)^\frac{1}{p}} \left(1-\frac{t^p}{p-1}\right)^{-\frac{1}{p}}\, dt=2\,(p-1)^\frac{1}{p}\,\frac{\pi/p}{\sin(\pi/p)}.
\end{equation}
\subsection{Generalized eigenvalues} It is quite natural to wonder if similar conclusions can be drawn also in the case of the following generalized notion of first eigenvalues
\[
\lambda_{p,q}(\Omega)=\min_{v\in W^{1,p}_0(\Omega)}\left\{\int_\Omega |\nabla v|^p\, dx\, :\, \int_\Omega |v|^q\, dx=1\right\},
\]
and
\[
\mu_{p,q}(\Omega)=\min_{v\in W^{1,p}(\Omega)}\left\{\int_\Omega |\nabla v|^p\, dx\, :\, \int_\Omega |v|^q\, dx=1\,\mbox{ and }\,\int_\Omega |v|^{q-2}\, v\, dx=0\right\}.
\]
where $q\not =p$.
Quite interestingly, it turns out that for $q>p$ one has the following picture:
\begin{itemize}
\item one can prove the analogue of \eqref{maestro};
\vskip.2cm
\item this estimate {\it is not} sharp;
\vskip.2cm
\item the maximization problem 
\[
\sup\Big\{\mu_{p,q}(\Omega)\, :\, \Omega\subset\mathbb{R}^N \mbox{ convex},\, \mathrm{diam}(\Omega)\ge 1\Big\}
\]
now {\it admits a solution};
\vskip.2cm
\item a lower bound like \eqref{mild4} is not possible (and the infimum in \eqref{mild3} is $0$);
\end{itemize}
On the contrary, for $q<p$ all the previous statements have to be reverted. In particular, we have
\[
\sup\Big\{\mu_{p,q}(\Omega)\, :\, \Omega\subset\mathbb{R}^N \mbox{ convex},\ \mathrm{diam}(\Omega)\ge 1\Big\}=+\infty,
\] 
and it is rather the minimization problem for $\mu_{p,q}$ which is now well-posed (see Section \ref{sec:4} for more details).

\subsection{Plan of the paper}
In Section \ref{sec:2} we prove some basic results concerning properties of $\mu_{p,q}(\Omega)$ and $\lambda_{p,q}(\Omega)$. Section \ref{sec:3} is devoted to the investigation of problem 
\eqref{mild2}. In Section \ref{sec:4}  we consider the case $p\not = q$.
As a consequence of some the estimates proved in the paper, in Section \ref{sec:5} we exhibit a nodal domain property for Neumann eigenfunctions. Roughly speaking, this shows that for $q\ge p$ eigenfunctions associated to $\mu_{p,q}(\Omega)$ can not have a closed nodal line. Finally, Appendix \ref{sec:1d} deals with a one-dimensional variational problem and its extremals, whose properties are crucially exploited to prove optimality of the estimate \eqref{maestro}.
\begin{ack}
The first author has been partially supported by the Gaspard Monge Program for Optimization (PGMO), by EDF and the Jacques Hadamard Mathematical Foundation, through the research contract MACRO.
Part of this work has been done during some visits of the first author to Napoli. The Departement of Mathematics of the University of Napoli and its facilities are kindly acknowledged.
\end{ack}

\section{Preliminaries}
\label{sec:2}
We fix two exponents $p$ and $q$ such that $1<p<\infty$ and
\[
1<q<p^*:=\left\{\begin{array}{lr}
\displaystyle \frac{N\, p}{N-p},&\qquad \mbox{ if } 1<p<N,\\
&\\
+\infty,& \mbox{ if }p\ge N.\\
\end{array}
\right.
\]
For every $\Omega\subset\mathbb{R}^N$ open bounded Lipschitz set, we use the standard Sobolev spaces
\[
W^{1,p}(\Omega)=\big\{u\in L^p(\Omega)\, :\, \nabla u\in L^p(\Omega;\mathbb{R}^N)\big\},
\]
and $W^{1,p}_0(\Omega)$, the latter being the completion of $C^\infty_0(\Omega)$ with respect to the norm of $W^{1,p}(\Omega)$. We then define the two quantities
\[
\mu_{p,q}(\Omega)=\inf_{v\in W^{1,p}(\Omega)\setminus\{0\}}\left\{\frac{\displaystyle\int_\Omega |\nabla v|^p\, dx}{\displaystyle \left(\int_\Omega |v|^q\, dx\right)^\frac{p}{q}}\, :\, \int_\Omega |v|^{q-2}\, v\, dx=0\right\}
\]
and
\[
\lambda_{p,q}(\Omega)=\inf_{v\in W^{1,p}_0(\Omega)\setminus\{0\}} \frac{\displaystyle\int_\Omega |\nabla v|^p\, dx}{\displaystyle\left(\int_\Omega |v|^p\, dx\right)^\frac{p}{q}}.
\]
It is useful to recall that $\mu_{p,q}(\Omega)$ can be defined through the unconstrained minimization
\begin{equation}
\label{rallai}
\mu_{p,q}(\Omega)=\inf_{v\in \widehat W^{1,p}(\Omega)}\frac{\displaystyle\int_\Omega |\nabla v|^p\, dx}{\displaystyle \min_{t\in\mathbb{R}} \left(\int_\Omega |v-t|^q\, dx\right)^\frac{p}{q}},
\end{equation}
where we set $\widehat W^{1,p}(\Omega)=\{v\in W^{1,p}(\Omega)\, :\, \int_\Omega |\nabla v|^p\, dx>0\}$.
Also, we have that if $\lambda$ is such that the equation
\[
-\Delta_p u=\lambda\, \|u\|^{p-q}_{L^q(\Omega)}\, |u|^{q-2}\, u\quad \mbox{ in }\Omega,\qquad u=0,\quad \mbox{ on }\partial\Omega,
\]
admits a nontrivial solution in $W^{1,p}_0(\Omega)$, then $\lambda\ge \lambda_{p,q}(\Omega)$.
We start with a preliminary result on the quantities $\mu_{p,q}$ and $\lambda_{p,q}$.
\begin{lm}
\label{lm:comparison}
Let $1<p<\infty$ and $1<s<q<p^*$, then we have
\[
\lambda_{p,q}(\Omega)\le |\Omega|^{\frac{p}{s}-\frac{p}{q}}\, \lambda_{p,s}(\Omega)\qquad \mbox{ and }\qquad \mu_{p,q}(\Omega)\le |\Omega|^{\frac{p}{s}-\frac{p}{q}}\, \mu_{p,s}(\Omega).
\]
\end{lm}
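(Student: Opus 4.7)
The plan is to reduce both inequalities to a single application of Hölder's inequality, since the only real obstacle is handling the different orthogonality constraints that appear in the Neumann case.

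For the Dirichlet inequality, I would fix $v\in W^{1,p}_0(\Omega)\setminus\{0\}$ and use Hölder with exponent $q/s>1$ on $\int_\Omega |v|^s\cdot 1\,dx$, obtaining
\[
\left(\int_\Omega |v|^s\,dx\right)^{\frac{p}{s}}\le |\Omega|^{\frac{p}{s}-\frac{p}{q}}\,\left(\int_\Omega |v|^q\,dx\right)^{\frac{p}{q}}.
\]
Dividing the Rayleigh quotient defining $\lambda_{p,q}$ by this inequality yields the pointwise bound
\[
\frac{\int_\Omega |\nabla v|^p\,dx}{\left(\int_\Omega |v|^q\,dx\right)^{\frac{p}{q}}}\le |\Omega|^{\frac{p}{s}-\frac{p}{q}}\,\frac{\int_\Omega |\nabla v|^p\,dx}{\left(\int_\Omega |v|^s\,dx\right)^{\frac{p}{s}}},
\]
and taking the infimum over $v\in W^{1,p}_0(\Omega)\setminus\{0\}$ gives the first inequality.

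For the Neumann inequality, the direct analogue of the above argument fails because the two admissible classes are defined through different orthogonality conditions $\int_\Omega |v|^{q-2}v\,dx=0$ and $\int_\Omega |v|^{s-2}v\,dx=0$, which in general select different functions. The remedy is to use the unconstrained reformulation \eqref{rallai}. For any $v\in \widehat W^{1,p}(\Omega)$, letting $t_q$ be a minimizer of $t\mapsto \int_\Omega |v-t|^q\,dx$, the same Hölder inequality applied to $v-t_q$ yields
\[
\left(\int_\Omega |v-t_q|^q\,dx\right)^{\frac{p}{q}}\ge |\Omega|^{\frac{p}{q}-\frac{p}{s}}\left(\int_\Omega |v-t_q|^s\,dx\right)^{\frac{p}{s}}\ge |\Omega|^{\frac{p}{q}-\frac{p}{s}}\,\min_{t\in\mathbb{R}}\left(\int_\Omega |v-t|^s\,dx\right)^{\frac{p}{s}},
\]
where the last step just uses the minimality of the $L^s$--optimal translation. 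Combining this with \eqref{rallai} gives the pointwise inequality between the two Neumann Rayleigh quotients, and taking the infimum over $\widehat W^{1,p}(\Omega)$ concludes.

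The main conceptual point — the only one requiring even a moment's thought — is this passage through the unconstrained characterization \eqref{rallai}, which decouples the inequality from the orthogonality constraints. Everything else is a single Hölder estimate.
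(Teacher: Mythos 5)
Your proof is correct and takes essentially the same approach as the paper: a single application of H\"older's inequality combined with the unconstrained characterization \eqref{rallai} to sidestep the mismatch between the two orthogonality constraints. The only (cosmetic) difference is that the paper plugs the specific minimizer $u_s$ of $\mu_{p,s}$ into the $\mu_{p,q}$ Rayleigh quotient, while you prove the pointwise comparison of the two Rayleigh quotients for an arbitrary $v\in\widehat W^{1,p}(\Omega)$ and then take infima, which avoids invoking existence of a minimizer.
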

\begin{proof}
The result is a plain consequence of H\"older inequality. Let us prove for example the second inequality: we pick $u_s\in \widehat W^{1,p}(\Omega)$ a function minimizing the Rayleigh quotient \eqref{rallai} which defines $\mu_{p,s}(\Omega)$.
We then define $t_q$ as the minimizer of
\[
t\mapsto \int_\Omega |u_s-t|^q\, dx,
\]
thus we get
\[
\begin{split}
\mu_{p,q}(\Omega)&=\min_{v\in \widehat W^{1,p}(\Omega)} \frac{\displaystyle\int_\Omega |\nabla v|^p\, dx}{\min\limits_{t\in\mathbb{R}}\displaystyle \left(\int_\Omega |v-t|^q\, dx\right)^\frac{p}{q}}\le \frac{\displaystyle\int_\Omega |\nabla u_s|^p\, dx}{\displaystyle \left(\int_\Omega |u_s-t_q|^q\, dx\right)^\frac{p}{q}}\\
&\le |\Omega|^{\frac{p}{s}-\frac{p}{q}}\, \frac{\displaystyle\int_\Omega |\nabla u_s|^p\, dx}{\displaystyle \left(\int_\Omega |u_s-t_q|^s\, dx\right)^\frac{p}{s}}\le |\Omega|^{\frac{p}{s}-\frac{p}{q}}\, \frac{\displaystyle\int_\Omega |\nabla u_s|^p\, dx}{\min\limits_{t\in\mathbb{R}}\displaystyle \left(\int_\Omega |u_s-t|^s\, dx\right)^\frac{p}{s}}
\end{split}
\]
which in turn gives the desired inequality, by minimality of $u_s$.
\end{proof}
We will also need the following very simple geometric result for convex sets.
\begin{lm}
\label{lm:normali}
Let $\Omega\subset\mathbb{R}^N$ be an open convex set, and let $x_0\in\partial\Omega$. Then 
\[
\langle x-x_0,\nu_\Omega(x)\rangle\ge 0,\qquad\mbox{ for $\mathcal{H}^{N-1}-$a.e. }x\in\partial\Omega,
\]
where $\nu_\Omega(x)$ denotes the outer unit normal to $\partial \Omega$ at the point $x$.
\end{lm}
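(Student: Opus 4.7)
The plan is to exploit the defining property of convex sets, namely that at every boundary point admitting a tangent hyperplane, this hyperplane is a supporting hyperplane for $\Omega$. Since $\Omega$ is an open bounded convex set (with nonempty interior, as is implicit from the context), its boundary is locally the graph of a Lipschitz function, so by Rademacher's theorem the outer unit normal $\nu_\Omega(x)$ is defined at $\mathcal{H}^{N-1}$-a.e. point $x\in\partial\Omega$.

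First I would fix a point $x\in\partial\Omega$ at which $\nu_\Omega(x)$ exists. The goal is to show that
\[
H_x:=\{y\in\mathbb{R}^N\,:\,\langle y-x,\nu_\Omega(x)\rangle\le 0\}
\]
contains the whole closure $\overline\Omega$. Once this is established, applying it to $y=x_0\in\partial\Omega\subset\overline\Omega$ gives $\langle x_0-x,\nu_\Omega(x)\rangle\le 0$, which is exactly the inequality claimed (with the sign flipped).

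To prove the inclusion $\overline\Omega\subset H_x$, I would argue by contradiction. Suppose there exists $y\in\overline\Omega$ with $\langle y-x,\nu_\Omega(x)\rangle>0$. Since $\Omega$ is convex and $y\in\overline\Omega$, $x\in\partial\Omega$, every point of the segment $[x,y)$ different from $x$ lies in $\overline\Omega$; by a standard convexity argument (using that $\Omega$ has nonempty interior, so we can thicken $y$ by a small open ball contained in $\Omega$ and take convex combinations), one actually finds points of $\Omega$ in the open half-space $\{\langle\,\cdot-x,\nu_\Omega(x)\rangle>0\}$ arbitrarily close to $x$. This contradicts the fact that $\nu_\Omega(x)$ is the \emph{outer} unit normal at $x$, since by definition the outer normal points into the complement of $\overline\Omega$ locally, so a neighborhood of $x$ in the positive half-space $\{\langle\,\cdot-x,\nu_\Omega(x)\rangle>0\}$ must be disjoint from $\Omega$.

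The only mildly delicate point is pinning down what "outer unit normal at a boundary point of a Lipschitz convex set" means and linking it to the supporting hyperplane: this is standard (see e.g.\ the discussion of the reduced boundary of a set of finite perimeter, or equivalently the Lipschitz parametrization of $\partial\Omega$), and I would just cite this fact rather than reprove it. Once that identification is made, the proof is essentially one line.
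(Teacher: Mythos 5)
Your proposal is correct and follows essentially the same route as the paper: both rely on the fact that at a boundary point where the outer normal exists, the corresponding hyperplane is a supporting hyperplane for the convex set $\Omega$, so $\overline\Omega$ lies in the half-space $\{y : \langle y-x,\nu_\Omega(x)\rangle\le 0\}$, and applying this to $y=x_0$ gives the claim. The paper simply invokes the supporting-hyperplane property directly, whereas you spell out the justification (Rademacher and a contradiction argument) in slightly more detail, but the core idea is identical.
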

\begin{proof}
Since $\Omega$ is convex, given $x\in\partial\Omega$ we have that
\[
\overline\Omega\subset \{y\in\mathbb{R}^N\, :\, \langle y-x,\nu_\Omega(x)\rangle\le 0\},
\]
i.e. the hyperplane orthogonal to $\nu_\Omega(x)$ and passing from $x$ is a supporting hyperplane for $\Omega$. In particular, since $x_0\in\overline\Omega$ we get $\langle x_0-x,\nu_\Omega(x)\rangle\le 0$, which concludes the proof.
\end{proof}

\section{A Szeg\H{o}-Weinberger inequality for convex sets}
\label{sec:3}

The following is the main result of the paper. This shows that the nonlinear spectral optimization problem
\[
\sup\Big\{\mu_{p,p}(\Omega)\, :\, \Omega\subset\mathbb{R}^N \mbox{ convex},\, \mathrm{diam}(\Omega)\ge 1\Big\},
\]
does not admit a solution, but a maximizing sequence is given by a family of convex sets suitably degenerating to a segment. Of course, the value $1$ for the diameter constraint plays no special role and could be replaced by any constant $c>0$.
\begin{teo}
\label{p_cop}
Let $\Omega\subset\mathbb{R}^N$ be an open bounded convex set and $1<p<\infty$. Then we have
\begin{equation}
\label{coppolicchio}
\mu_{p,p}(\Omega)<\lambda_{p,p}(B)\,\left(\frac{\mathrm{diam}(B)}{\mathrm{diam}(\Omega)}\right)^{p},
\end{equation}
where $B$ is any $N-$dimensional ball. 
\par
Equality sign in \eqref{coppolicchio} is never achieved but the inequality is sharp. More precisely, there exists a sequence $\{\Omega_k\}_{k\in\mathbb{N}}\subset\mathbb{R}^N$ of convex sets such that:
\begin{itemize}
\item $\mathrm{diam}(\Omega_k)=d>0$, for every $k\in\mathbb{N}$;
\item $\Omega_k$ converges to a segment of length $d$ in the Hausdorff topology;
\item we have
\begin{equation}
\label{figurine}
\lim_{k\to\infty}\mu_{p,p}(\Omega_k)=\lambda_{p,p}(B_{d/2}),
\end{equation}
where $B_{d/2}$ is an $N-$dimensional ball having radius $d/2$.
\end{itemize}
\end{teo}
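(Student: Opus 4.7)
The plan is to upper-bound $\mu_{p,p}(\Omega)$ by feeding into the variational characterization \eqref{rallai} a two-bumps test function built from the first Dirichlet eigenfunction on the ball $B_{d/2}$, and to witness sharpness via a sequence of convex bicones degenerating to a segment. By the homogeneity of both sides of \eqref{coppolicchio}, reduce to $\mathrm{diam}(\Omega)=d=\mathrm{diam}(B)$, so $B=B_{d/2}$ and the target inequality becomes $\mu_{p,p}(\Omega)<\lambda_{p,p}(B_{d/2})$. Pick two points $x_0,x_1\in\overline{\Omega}$ with $|x_0-x_1|=d$: the balls $B_{d/2}(x_0)$ and $B_{d/2}(x_1)$ are then externally tangent at the midpoint of $[x_0,x_1]$. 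Let $U$ denote the positive, radial, decreasing first Dirichlet eigenfunction of the $p$-Laplacian on $B_{d/2}$ (so $U(d/2)=0$ and $U'\le 0$), and define
\[
v(x):=A\,U(|x-x_0|)\,\mathbf{1}_{B_{d/2}(x_0)}(x)\,-\,B\,U(|x-x_1|)\,\mathbf{1}_{B_{d/2}(x_1)}(x),
\]
with $A,B>0$ chosen so that $\int_\Omega v|v|^{p-2}\,dx=0$; this choice is available because $\int_{\Omega\cap B_{d/2}(x_j)}U^{p-1}\,dx>0$ for $j=0,1$ (since $x_j\in\overline{\Omega}$), and $v\in W^{1,p}(\Omega)$ thanks to $U$ vanishing continuously on the two spheres.

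Setting $I_j:=\int_{\Omega\cap B_{d/2}(x_j)}|\nabla U(|x-x_j|)|^p\,dx$ and $J_j:=\int_{\Omega\cap B_{d/2}(x_j)}U(|x-x_j|)^p\,dx$, the zero-integral constraint makes $t=0$ the minimizer in \eqref{rallai}, so
\[
\mu_{p,p}(\Omega)\le\frac{A^p I_0+B^p I_1}{A^p J_0+B^p J_1}.
\]
Integrating by parts on each $\Omega\cap B_{d/2}(x_j)$ and using $-\Delta_p U=\lambda_{p,p}(B_{d/2})U^{p-1}$ together with $U=0$ on the spherical part of the boundary (which therefore contributes $0$) yields
\[
I_j=\lambda_{p,p}(B_{d/2})\,J_j+\int_{\partial\Omega\cap B_{d/2}(x_j)} U\,|U'|^{p-2}U'(|x-x_j|)\,\frac{\langle x-x_j,\nu_\Omega\rangle}{|x-x_j|}\,d\mathcal{H}^{N-1}.
\]
Since $U\ge 0$, $U'\le 0$, and Lemma \ref{lm:normali} gives $\langle x-x_j,\nu_\Omega\rangle\ge 0$, the boundary integrand is non-positive, so $I_j\le\lambda_{p,p}(B_{d/2})\,J_j$ and therefore $\mu_{p,p}(\Omega)\le\lambda_{p,p}(B_{d/2})$. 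The inequality is strict because $v$ cannot be a first Neumann eigenfunction: $\nabla v$ has a genuine jump of size $|U'(d/2)|>0$ across the interior hypersurface $\Omega\cap\partial B_{d/2}(x_j)$, which for $N\ge 2$ has positive $\mathcal{H}^{N-1}$-measure (since $\mathrm{diam}(\Omega)=d>d/2$ forbids $\Omega\subset B_{d/2}(x_j)$), whereas any minimizer of the Rayleigh quotient is $C^{1,\alpha}$ in the interior as a weak solution of the $p$-eigenvalue equation. Hence the Rayleigh quotient of $v$ strictly exceeds $\mu_{p,p}(\Omega)$, and \eqref{coppolicchio} follows.

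For the sharpness, consider the bicones $\Omega_k:=\{(s,y)\in(0,d)\times\mathbb{R}^{N-1}:|y|<k^{-1}\min(s,d-s)\}$: convex sets of diameter $d$ (attained at the apex pair $(0,0),(d,0)$) Hausdorff-converging to $[0,d]\times\{0\}$. Applying the same test function on $\Omega_k$ with $x_0=0$, $x_1=de_1$, the conical faces of $\partial\Omega_k$ have apex at $x_j$ in their respective half-strip, so $\langle x-x_j,\nu\rangle\equiv 0$ there and the boundary integrals vanish identically; this already yields $\mu_{p,p}(\Omega_k)\le\lambda_{p,p}(B_{d/2})$. To match this from below in the limit, a thin-domain analysis shows that $\mu_{p,p}(\Omega_k)$ converges to the first nontrivial eigenvalue of the 1D weighted problem on $(0,d)$ with weight $(\min(s,d-s))^{N-1}$; using the symmetry $s\leftrightarrow d-s$, this problem is equivalent on $(0,d/2)$ with Dirichlet condition at $s=d/2$ to the radial Dirichlet $p$-Laplacian equation on $B_{d/2}^N$, whose first eigenvalue is $\lambda_{p,p}(B_{d/2})$. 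The main technical obstacle is making this sharpness step rigorous: carrying out the thin-domain limit and identifying the 1D weighted eigenvalue with the radial $N$-dimensional Dirichlet eigenvalue is precisely the content of the one-dimensional variational analysis in the Appendix.
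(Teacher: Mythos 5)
Your proof of \eqref{coppolicchio} follows the paper's strategy almost line by line: same two-cap test function built from the radial Dirichlet eigenfunction of $B_{d/2}$, same integration by parts, same use of Lemma~\ref{lm:normali} (note you should observe that $x_0,x_1$ necessarily lie on $\partial\Omega$, as the lemma requires). The only real difference is the argument for strictness. The paper shows that the test function cannot be a minimizer by a Harnack-inequality argument near a point of $\partial\Omega_0\cap\Omega$, explicitly avoiding any unique continuation principle; you instead argue via interior $C^{1,\alpha}$ regularity of $p$-harmonic eigenfunctions and the gradient jump of size $|U'(d/2)|>0$ across $\Omega\cap\partial B_{d/2}(x_j)$. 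Both work: your argument needs $U'(d/2)\neq 0$ (Hopf/Vázquez) and that $\Omega\cap\partial B_{d/2}(x_j)$ has positive $(N-1)$-measure, which indeed holds since the midpoint of the diameter segment is an interior point of $\Omega$. This variant is equally clean and also avoids unique continuation.

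For the sharpness part, however, your proposal is only an outline and the hard step is left unproven. You correctly identify the bicone sequence, the rescaling to a fixed profile, and the passage to a one-dimensional weighted eigenvalue problem with weight $(\min(s,d-s))^{N-1}$; and you correctly note that imposing a Dirichlet condition at the midpoint turns it into the radial Dirichlet problem on $B_{d/2}$. But you assert, rather than prove, that the minimizer of the 1D weighted Neumann problem vanishes \emph{precisely at the symmetry point} — this is not a consequence of symmetry of the weight alone (the minimizer need not be odd a priori), and it is exactly the content of Lemma~\ref{lm:apres}, which requires a genuine argument (a rearrangement/symmetrization construction followed by a first-variation perturbation). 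Likewise the thin-domain compactness argument (the bound \eqref{kappa}, the conclusion $\nabla_{x'}w\equiv 0$, and the survival of the constraint $\int|w|^{p-2}w\,g=0$ in the limit) is invoked but not carried out. You flag this yourself, so the gap is acknowledged rather than hidden, but as written the optimality half of the theorem is not established.
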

\begin{proof}
We split the proof into two parts: at first we prove \eqref{coppolicchio}, then we construct the sequence $\{\Omega_k\}_{k\in\mathbb{N}}$ verifying \eqref{figurine}.
\vskip.2cm\noindent
\emph{Proof of \eqref{coppolicchio}.}  First of all, we observe that inequality \eqref{coppolicchio} is in scaling invariant form. Then without loss of generality, we can confine ourselves to prove that
\[
\mu_{p,p}(\Omega)<\lambda_{p,p}(B),
\] 
where $B$ is the ball centered at the origin such that $\mathrm{diam}(\Omega)=\mathrm{diam}(B)$. Let us take $u\in C^{1,\alpha}(\overline B)\cap C^\infty(B\setminus\{0\})$ the first Dirichlet eigenfunction for the ball $B$, normalized by the conditions
\[
\|u\|_{L^p(B)}=1\qquad \mbox{ and }\qquad u>0.
\]
This is radially symmetric and solves
\begin{equation}
\label{palla}
-\Delta_p u=\lambda_{p,p}(B)\, u^{p-1}\qquad \mbox{ and }\qquad u=0\quad \mbox{ on } \partial B.
\end{equation}
We then take two points $x_0,x_1\in\partial\Omega$ such that $|x_0-x_1|=\mathrm{diam}(\Omega)$,
and we define the two caps
\[
\Omega_i=\left\{x\, :\, |x-x_i|<\frac{\mathrm{diam}(\Omega)}{2}\right\}\cap \Omega,\qquad i=0,1,
\]
which are mutually disjoint (see Figure \ref{fig:fig3}).
\begin{figure}
\includegraphics[scale=.25]{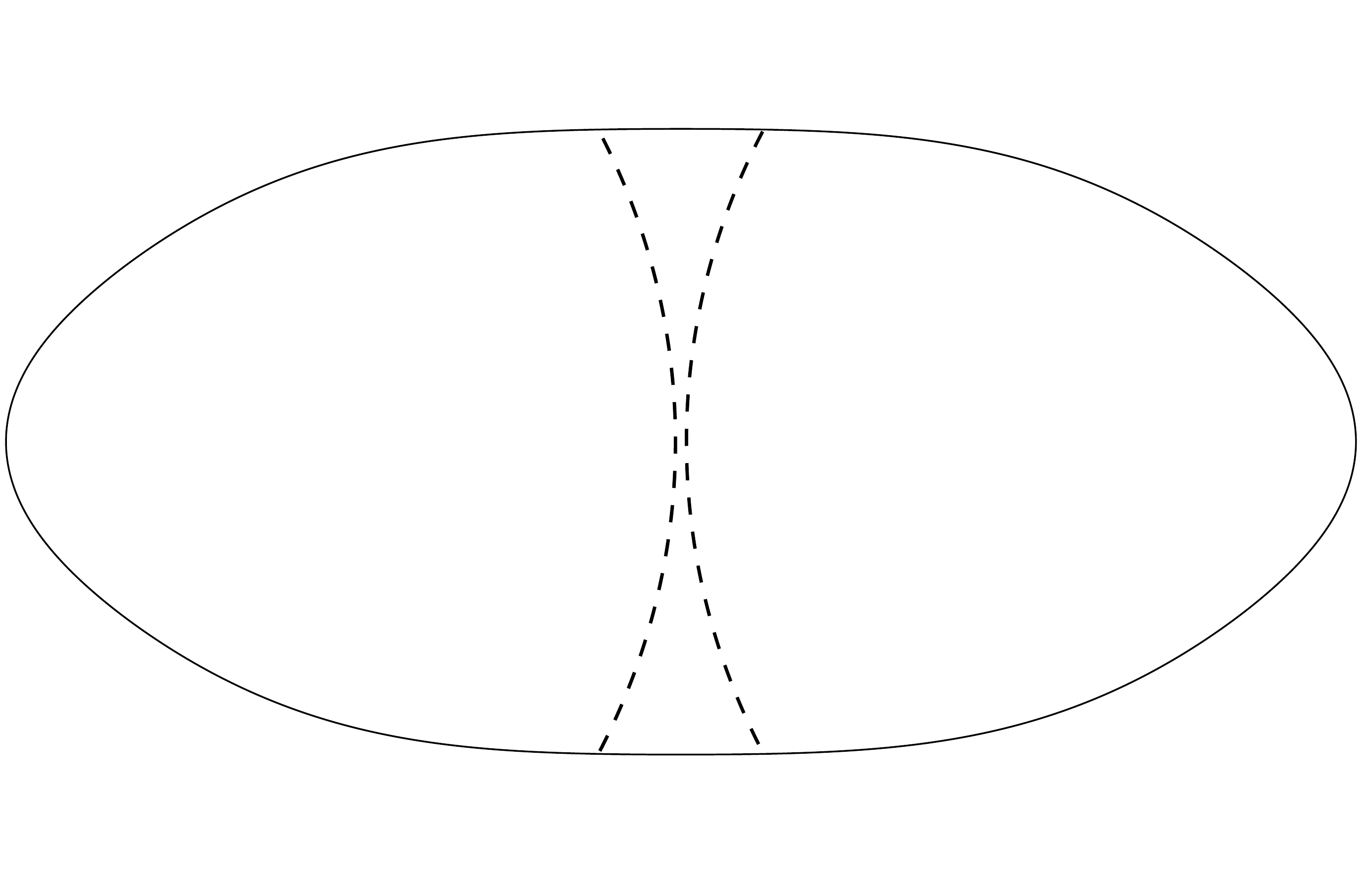}
\caption{The construction of the two caps $\Omega_0$ and $\Omega_1$.}
\label{fig:fig3}
\end{figure}
We then take the function
\[
\varphi(x)=u(x-x_0)\cdot 1_{\Omega_0}(x)-c\, u(x-x_1)\cdot 1_{\Omega_1}(x)\in W^{1,p}(\Omega),
\]
where the costant $c\in\mathbb{R}$ is given by
\[
c=\frac{\displaystyle\int_{\Omega_0} u(x-x_0)^{p-1}\, dx}{\displaystyle\int_{\Omega_1} u(x-x_1)^{p-1}\, dx},\qquad \mbox{ so that }\qquad \int_{\Omega} |\varphi|^{p-2}\, \varphi\, dx=0.
\]
By using this function $\varphi$ in the Rayleigh quotient defining $\mu_{p,p}(\Omega)$, we get
\[
\mu_{p,p}(\Omega)<
\frac{\displaystyle\int_{\Omega_0} |\nabla u(x-x_0)|^p\, dx+c^p\,\int_{\Omega_1} |\nabla u(x-x_1)|^p\, dx}{\displaystyle \int_{\Omega_0} |u(x-x_0)|^p\, dx+c^p \,\int_{\Omega_1} |u(x-x_1)|^q\, dx},
\]
where the strict inequality holds since $\varphi$ {\it can not be an eigenfunction}\footnote{Observe that if the Rayleigh quotient of $\varphi$ achieves the minimal value $\mu_{p,q}(\Omega)$, then $\varphi$ would solve
\[
-\Delta_p \varphi=\mu_{p,q}(\Omega)\, |\varphi|^{q-2}\, \varphi,\qquad \mbox{ in }\Omega,
\]
in a weak sense. Let us take $y_0\in \partial\Omega_0\cap \Omega$, by picking a ball $B_\varrho(y_0)$ with radius $\varrho$ sufficiently small so that $B_\varrho(y_0)\subset\Omega\setminus\Omega_1$, we would obtain that $\varphi$ is a nonnegative solution of the equation above in $B_\varrho(y_0)$. Then by Harnack's inequality (see \cite[Theorem 1.1]{Tr}) one obtains
\[
0<\max_{B_\varrho(y_0)} \varphi\le C\, \min_{B_\varrho(y_0)} \varphi=0,
\]
thus getting a contradiction. We point out that {\it we are not using any unique continuation argument}.}. 
By performing an integration by parts in the integrals at the numerator, we obtain\footnote{Observe that we have $u(x-x_0)=0$ on $\partial\Omega_0\cap\Omega$.}
\[
\begin{split}
\int_{\Omega_0} |\nabla u(x-x_0)|^p\, dx&=\int_{\partial\Omega\cap \partial\Omega_0} |\nabla u(x-x_0)|^{p-2}\, \frac{\partial u}{\partial\nu_\Omega}(x-x_0)\, u(x-x_0)\, d\mathcal{H}^{N-1}(x)\\
&-\int_{\Omega_0} \Delta_p u(x-x_0)\, u(x-x_0)\, dx\\
&=\int_{\partial\Omega\cap \partial\Omega_0} |\nabla u(x-x_0)|^{p-2}\, \frac{\partial u}{\partial\nu_\Omega}(x-x_0)\, u(x-x_0)\, d\mathcal{H}^{N-1}(x)\\
&+\lambda_{p,p}(B)\,\int_{\Omega_0} |u(x-x_0)|^p\, dx,
\end{split}
\]
where we used the equation \eqref{palla} solved by $u$. Observe that the first integral in the right-hand side is non-positive. Indeed $u$ is a radially decreasing function, then (with a small abuse of notation) we have
\[
\langle\nabla u(x-x_0),\nu_\Omega)(x)\rangle=u'(|x-x_0|)\, \left\langle \frac{x-x_0}{|x-x_0|},\nu_\Omega(x)\right\rangle,
\]
and the claim follows from Lemma \ref{lm:normali}, since $u'\le 0$. The same computations apply to the other terms appearing in the numerator, thus obtaining
\[
\mu_{p,p}(\Omega)< \lambda_{p,p}(B)\,\frac{\displaystyle \int_{\Omega_0} |u(x-x_0)|^q\, dx+c^p \,\int_{\Omega_1} |u(x-x_1)|^p\, dx}{\displaystyle\int_{\Omega_0} |u(x-x_0)|^p\, dx+c^p\,\int_{\Omega_1} |u(x-x_1)|^q\, dx}=\lambda_{p,p}(B),
\]
which concludes the proof of \eqref{coppolicchio}.
\vskip.2cm\noindent
\emph{Optimality of \eqref{coppolicchio}}. Let $B$ be a ball of diameter $d$. We now prove optimality of \eqref{coppolicchio}: for this we need to construct a sequence of convex sets $\{\Omega_k\}_{k\in\mathbb{N}}$, all sharing the same diameter $d$, and such that 
\begin{equation}
\label{optimality}
\lambda_{p,p}(B)\le \liminf_{k\to\infty} \mu_{p,p}(\Omega_k). 
\end{equation}
For all $s\in\mathbb{R}$ and $k\in \mathbb{N}\setminus\{0\}$ let us denote by 
\[
\mathcal{C}^-_k(s)=\left\{ (x_1,x')\in\mathbb{R}\times\mathbb{R}^{N-1}\, :\,  (x_1-s)_->k\,|x'| \right\}
\]  
and
\[
\mathcal{C}^+_k(s)=\left\{ (x_1,x')\in\mathbb{R}\times\mathbb{R}^{N-1}\, :\,  (x_1-s)_+>k\,|x'| \right\}
\]
the left and right circular infinite cone in $\mathbb{R}^N$ whose axis is the $x_1-$axis, having vertex in $(s,0)\in\mathbb{R}\times\mathbb{R}^{N-1}$, and whose opening angle is $\alpha=2\,\arctan(1/k)$. We set 
\[
\Omega_k=\mathcal{C}^-_k\left(\frac{d}{2}\right)\cap\mathcal{C}^+_k\left(-\frac{d}{2}\right).
\]
In dimension $N=2$, $\Omega_k$ is nothing but a rhombus of diagonals $d$ and $d/k$. In higher dimension $\Omega_k$ is obtained by gluing together the basis of two right circular cones of height $d/2$ and radii $d/(2\,k)$ (see Figure \ref{fig:fig2}).
\begin{figure}
\includegraphics[scale=.3]{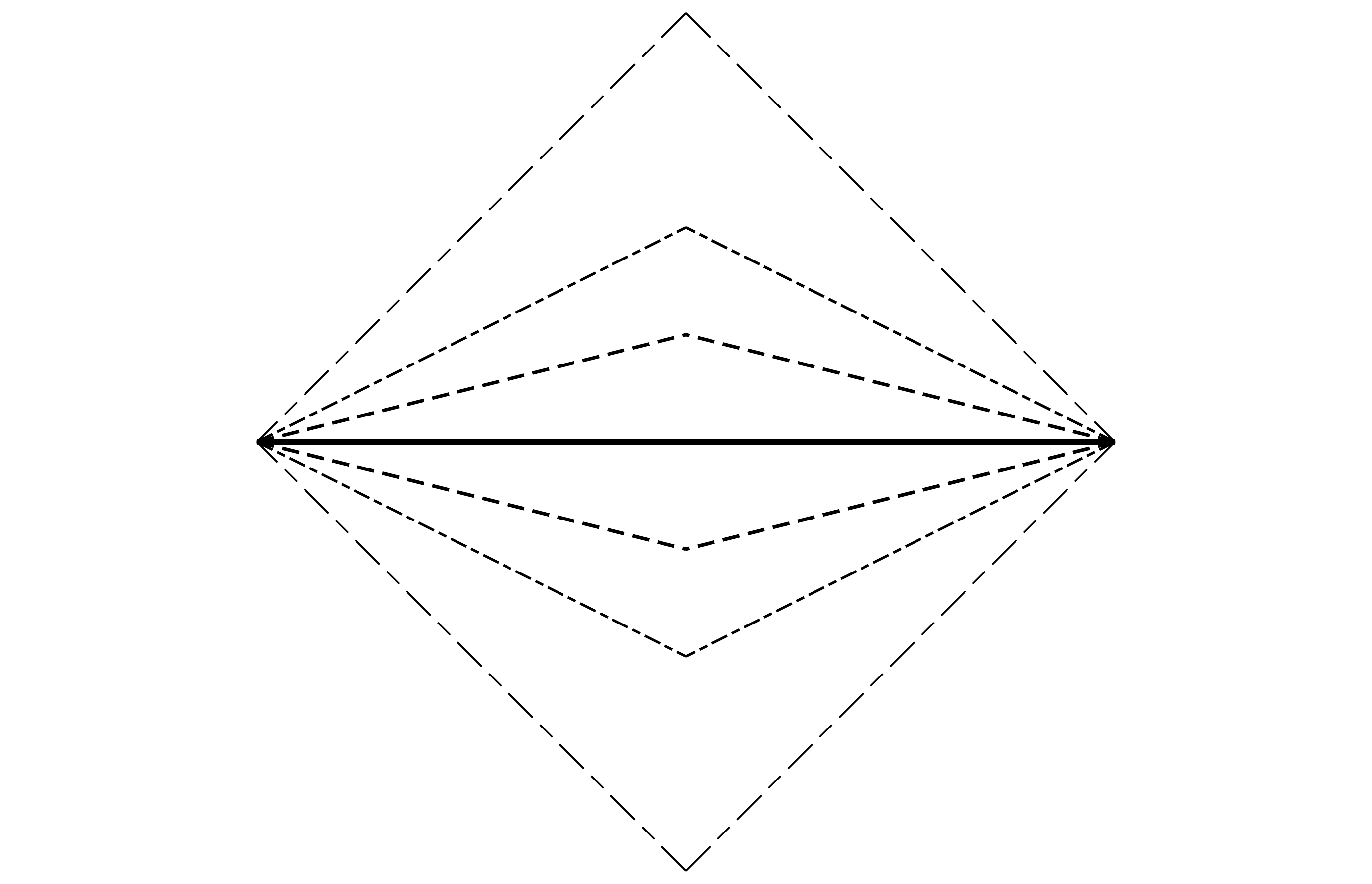}
\caption{The maximizing sequence $\Omega_k$ of Theorem \ref{p_cop}.}
\label{fig:fig2}
\end{figure}
\par
We claim that for this family inequality \eqref{optimality} holds true. 
We start observing that whenever $u\in W^{1,p}(\Omega_k)$ then the rescaled function $v(x_1,x')=u\left(x_1,x'/k\right)$ belongs to $W^{1,p}(\Omega_1)$ and we have
\[
\int_{\Omega_1} \left(|\partial_{x_1}v|^2+k^2|\nabla_{x'}v|^2\right)^\frac p2\, dx=k^{N-1}\,\int_{\Omega_k} |\nabla u|^p\, dx,\qquad \int_{\Omega_1} |v|^p\, dx =k^{N-1}\, \int_{\Omega_k} |u|^p\, dx,
\]
and
\[
\int_{\Omega_1} |v|^{p-2}\, v\, dx=k^{N-1}\, \int_{\Omega_k} |u|^{p-2}\, u\, dx=0.
\]
Thus we obtain 
\[
\begin{split}
\mu_{p,p}&(\Omega_k)=\min_{u\in W^{1,p}(\Omega_k)\setminus\{0\}}\left\{\frac{\displaystyle\int_{\Omega_k} |\nabla u|^p\, dx}{\displaystyle \int_{\Omega_k} |u|^p\, dx}\, :\, \int_{\Omega_k} |u|^{p-2}\, u\, dx=0\right\},\\
&=\min_{v\in W^{1,p}(\Omega_1)\setminus\{0\}}\left\{\frac{\displaystyle\int_{\Omega_1} \left( |\partial_{x_1}v|^2+k^2\,|\nabla_{x'}v|^2\right)^\frac p2\, dx}{\displaystyle \int_{\Omega_1} |v|^p\, dx}\, :\, \int_{\Omega_1} |v|^{p-2}\, v\, dx=0\right\}=:\gamma_k(\Omega_1).
\end{split}
\]
Now we denote by $u_k$ a function which minimizes the Rayleigh quotient defining $\mu_{p,p}(\Omega_k)$ and by $v_k(x_1,x')=u_k\left(x_1,x'/k\right)$ the corresponding function which minimizes the functional defining $\gamma_k(\Omega_1)$. Without loss of generality we can assume that $\|v_k\|_{L^p(\Omega_1)}=1$. Inequality \eqref{coppolicchio} implies that
\begin{equation}
\label{kappa}
\int_{\Omega_1} \left(  |\partial_{x_1}v_k|^2+k^2\,|\nabla_{x'}v_k|^2\right)^\frac{p}{2}\, dx\le C_{N,p,d},\qquad \mbox{ for all } k\in\mathbb{N}\setminus\{0\},
\end{equation}
then there exists $w\in W^{1,p}(\Omega_1)\setminus\{0\}$ so that $v_k\to w$ weakly in $W^{1,p}(\Omega_1)$ and strongly in $L^p(\Omega_1)$. Moreover we also have\footnote{The bound \eqref{kappa} implies that for every given $k_0\in\mathbb{N}\setminus\{0\}$, we have
\[
k_0^p\,\int_{\Omega_1} |\nabla_{x'} w|^p\, dx\le\int_{\Omega_1} \left(  |\nabla_{x_1} w|^2+k_0^2\,|\nabla_{x'}w|^2\right)^\frac p2\, dx\le\liminf_{k\to\infty}\int_\Omega \left(  |\nabla_{x_1}v_k|^2+k_0^2\,|\nabla_{x'}v_k|^2\right)^\frac{p}{2}\, dx\le C,
\]
which in turn gives $\nabla_{x'} w\equiv 0$ by the arbitrariness of $k_0$.} 
\[
\begin{split}
\nabla_{x'} w\equiv 0,\qquad \mbox{ and }\qquad\int_{\Omega_1} |w|^{p-2}\, w\, dx=0.
\end{split}
\]
Thus $w$ does not depend on the $x'$ variable and we will write for simplicity $w=w(x_1)$ with a slight abuse of notation. For all $s\in[-d/2,d/2]$ we denote by $\Gamma_s$ the section of $\Omega_1$ which is orthogonal to the $x_1-$axis at $x_1=s$ and set $g(s)=\mathcal{H}^{N-1}(\Gamma_s)$. Then we get
\[
\begin{split}
\liminf_{k\to\infty} \gamma_k(\Omega_1)&=\liminf_{k\to\infty}  \frac{\displaystyle\int_{\Omega_1} \left( |\partial_{x_1}v_k|^2+k^2|\nabla_{x'}v_k|^2\right)^\frac{p}{2}\, dx}{\displaystyle \int_{\Omega_1} |v_k|^p\, dx}\ge\liminf_{k\to\infty}  \frac{\displaystyle\int_{\Omega_1} |\partial_{x_1}v_k|^p\, dx}{\displaystyle \int_{\Omega_1} |v_k|^p\, dx}\\
&\ge \frac{\displaystyle\int_{\Omega_1} |w'|^p\, dx}{\displaystyle \int_{\Omega_1} |w|^p\, dx}=\frac{\displaystyle\int_{-d/2}^{d/2} |w'|^p \,g\, ds}{\displaystyle \int_{-d/2}^{d/2} |w|^p  \,g\, ds}\\
&\ge\min_{\phi\in W^{1,p}\left(\left(-d/2,d/2\right)\right)\setminus\{0\}}
\left\{\frac{\displaystyle\int_{-d/2}^{d/2} |\phi'|^p\,g\,\,ds}{\displaystyle \int_{-d/2}^{d/2} |\phi|^p\,g\,ds}\, :\, \int_{-d/2}^{d/2} |\phi|^{p-2}\,\phi\,g\, \,ds=0\right\}.
\end{split}
\]
Let us denote by $\eta$ the previous minimal value, then by Lemma \ref{lm:apres} a minimizer $f$ does exist and is a solution to the following boundary value problem
\[
\left\{\begin{array}{lc}
-\big(g\,|f'|^{p-2}\,f'\big)'=\eta \,g\,|f|^{p-2}\,f,& \mbox{ in } (-d/2,d/2),\\ &\\
f'(-d/2)=f'(d/2)=0.&
\end{array}
\right.
\] 
Still by Lemma \ref{lm:apres} we have that $f(0)=0$ and hence $f$ solves
\[
\left\{\begin{array}{lc}
-\big(g\,|f'|^{p-2}\,f'\big)'=\eta \,g\,|f|^{p-2}\,f,& \mbox{ in } (0,d/2),\\ &\\
f(0)=f'(d/2)=0.&
\end{array}
\right.
\] 
Finally, by reminding that $g(s)=\omega_{N-1}(d/2-s)^{N-1}$ for $0\le s\le d/2$, if we set $h(r)=f(d/2-r)$ then this solves
\[
\left\{
\begin{array}{lc}
-\big(r^{N-1}\,|h'|^{p-2}\,h'\big)'=\eta \, r^{N-1}\,|h|^{p-2}\, h, & \mbox{ in } (0,d/2), \\&\\
h'(0)=h(d/2)=0,&
\end{array}
\right.
\]
which means that the radial function $H(x)=h(|x|)$ is a Dirichlet eigenfunction of $-\Delta_p$ of a $N$-dimensional ball of radius $d/2$, namely $B$. Hence $\eta\ge \lambda_{p,p}(B)$ and we get
\[
\liminf_{k\to\infty} \mu_{p,p}(\Omega_k)=\liminf_{k\to\infty}\gamma_k(\Omega_1)\ge\lambda_{p,p}(B).
\]
This concludes the proof.
\end{proof}
From Theorem \ref{p_cop} and the isodiametric inequality
\begin{equation}
\label{isodiametric}
\frac{\mathrm{diam}(B)}{\mathrm{diam}(\Omega)}\le \left(\frac{|B|}{|\Omega|}\right)^\frac{1}{N},
\end{equation}
we can infer the following upper bound on $\mu_{p,p}$, in terms of the $N-$dimensional measure. We recall that for $p=2$ this is indeed a consequence of Szeg\H{o}-Weinberger inequality.
\begin{coro}
\label{coro:weaksw}
Let $\Omega\subset\mathbb{R}^N$ be an open bounded convex set and $1<p<\infty$. Then we have
\[
\mu_{p,p}(\Omega)<\lambda_{p,p}(B)\,\left(\frac{|B|}{|\Omega|}\right)^{\frac{p}{N}},
\]
where $B$ is any $N-$dimensional ball.
\end{coro}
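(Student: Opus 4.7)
The plan is a one-line deduction chaining two inequalities. From Theorem \ref{p_cop} applied to $\Omega$ we have
\[
\mu_{p,p}(\Omega) < \lambda_{p,p}(B)\left(\frac{\mathrm{diam}(B)}{\mathrm{diam}(\Omega)}\right)^p,
\]
valid for any $N$-dimensional ball $B$. Raising the isodiametric inequality \eqref{isodiametric} to the $p$-th power and multiplying by $\lambda_{p,p}(B)>0$ yields
\[
\lambda_{p,p}(B)\left(\frac{\mathrm{diam}(B)}{\mathrm{diam}(\Omega)}\right)^p \le \lambda_{p,p}(B)\left(\frac{|B|}{|\Omega|}\right)^{p/N},
\]
and combining these two estimates produces the claim. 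Observe that both sides of the target inequality are scale-invariant in $B$, consistent with the phrase ``for any $N$-dimensional ball $B$''.

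The only point worth checking is that the strict inequality survives the second step, since the isodiametric inequality becomes an equality precisely when $\Omega$ is itself a ball. This causes no problem because strictness is inherited directly from Theorem \ref{p_cop}, whose conclusion holds strictly for \emph{every} bounded open convex $\Omega$, balls included; indeed, the right-hand side there features $\lambda_{p,p}(B)$ rather than $\mu_{p,p}(B)$, and the strict separation $\mu_{p,p}(B) < \lambda_{p,p}(B)$ on any ball is recalled in the introduction. Consequently no case analysis and no additional argument are needed, and there is no genuine obstacle in the deduction.
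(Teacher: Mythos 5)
Your deduction is exactly the paper's: combine Theorem \ref{p_cop} with the isodiametric inequality \eqref{isodiametric} raised to the $p$-th power. The extra remark on why strictness persists is sound but not needed beyond noting that Theorem \ref{p_cop} already gives a strict inequality.
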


\section{The case $p\not = q$}
\label{sec:4}

In this section we discuss variants and extensions of Theorem \ref{p_cop} for the quantity $\mu_{p,q}$ when $p\not =q$. 

\subsection{The case $p<q$} Actually, with the very same proof of Theorem \ref{p_cop} we can prove the following upper bound. This time, the resulting inequality {\it is not sharp} (see Remark \ref{oss:nonva!} below). For this reason, though the argument is the same, we prefer to give a separate statement.
\begin{teo}
\label{p_cop2}
Let $\Omega\subset\mathbb{R}^N$ be an open bounded convex set and $1<p<q<p^*$. Then we have
\begin{equation}
\label{coppolicchio2}
\mu_{p,q}(\Omega)<\lambda_{p,q}(B)\,\left(\frac{\mathrm{diam}(B)}{\mathrm{diam}(\Omega)}\right)^{p+\frac{N\,p}{q}-N},
\end{equation}
where $B$ is any $N-$dimensional ball.
\end{teo}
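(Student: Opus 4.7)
The plan is to repeat the test function construction from the proof of Theorem \ref{p_cop} nearly verbatim; the only new ingredient will be a scalar inequality which, for $p=q$, reduces to the trivial identity $c^p = c^q$. First, by the common scaling of $\mu_{p,q}$ and $\lambda_{p,q}$ (both with exponent $p + Np/q - N$), inequality \eqref{coppolicchio2} is equivalent to $\mu_{p,q}(\Omega) < \lambda_{p,q}(B)$ in the normalized case $\mathrm{diam}(\Omega) = \mathrm{diam}(B) =: d$. I would let $u$ denote the positive first Dirichlet eigenfunction of $-\Delta_p$ on $B = B_{d/2}$ with $\|u\|_{L^q(B)} = 1$, so that $-\Delta_p u = \lambda_{p,q}(B)\, u^{q-1}$ in $B$, $u = 0$ on $\partial B$, and $u$ is radially decreasing. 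Picking $x_0, x_1 \in \partial\Omega$ realizing the diameter and defining the caps $\Omega_i = \Omega \cap B_{d/2}(x_i)$, I would test $\mu_{p,q}(\Omega)$ against exactly the same function
\[
\varphi(x) = u(x-x_0)\, 1_{\Omega_0}(x) - c\, u(x-x_1)\, 1_{\Omega_1}(x),
\]
with $c > 0$ forced by the constraint $\int_\Omega |\varphi|^{q-2}\varphi\,dx = 0$, i.e.\ $c^{q-1}\int_{\Omega_1} u^{q-1}(x-x_1)\,dx = \int_{\Omega_0} u^{q-1}(x-x_0)\,dx$.

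Next, I would handle each numerator term $\int_{\Omega_i}|\nabla u(x-x_i)|^p\,dx$ exactly as in Theorem \ref{p_cop}: the part of $\partial\Omega_i$ lying on the spherical boundary $\partial B_{d/2}(x_i)$ contributes nothing since $u$ vanishes there, while on $\partial\Omega_i\cap\partial\Omega$ the surface term is non-positive by Lemma \ref{lm:normali} combined with the radial decrease of $u$. Using the PDE for $u$, this gives $\int_{\Omega_i}|\nabla u(x-x_i)|^p\,dx \le \lambda_{p,q}(B)\, \int_{\Omega_i} u^q(x-x_i)\,dx$. Writing $P := \int_{\Omega_0} u^q(x-x_0)\,dx$ and $Q := \int_{\Omega_1} u^q(x-x_1)\,dx$, and invoking the Harnack argument to exclude that $\varphi$ itself is an eigenfunction, I arrive at the strict inequality
\[
\mu_{p,q}(\Omega) < \lambda_{p,q}(B)\, h(c),\qquad h(c) := \frac{P + c^{p}\, Q}{(P + c^{q}\, Q)^{p/q}}.
\]

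The one genuinely new point, and in my opinion the main (if elementary) obstacle, is the bound $h(c) \le 1$. A direct computation shows that the sign of $h'(c)$ equals that of $c^{p-1}-c^{q-1}$, so for $p < q$ the function $h$ is maximized at $c = 1$, with $h(1) = (P+Q)^{1-p/q}$. Since $1 - p/q > 0$, it is enough to verify $P + Q \le 1$. Here I would use a convex-geometric observation: placing $x_0 = 0$ and $x_1 = d\,e_1$, the conditions $|y-x_0|\le d$ and $|y-x_1|\le d$ (valid for every $y\in\Omega$) force $0 \le y_1 \le d$, whence $\Omega_0 - x_0 \subset B \cap \{y_1 \ge 0\}$ and $\Omega_1 - x_1 \subset B \cap \{y_1 \le 0\}$ lie in opposite hemispheres of $B$; the radial symmetry of $u$ then gives $P, Q \le \tfrac{1}{2}$. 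Chaining everything yields $\mu_{p,q}(\Omega) < \lambda_{p,q}(B)$, as required. The loss hidden in the bound $h(c) \le 1$, which is generically strict, is precisely what makes the estimate non-sharp, consistently with Remark \ref{oss:nonva!}.
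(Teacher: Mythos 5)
Your proposal is correct and follows the paper's proof essentially verbatim: same test function $\varphi$, same integration by parts via Lemma \ref{lm:normali}, and the same scalar optimization (the paper sets $t=c^q$ and maximizes $(A+t^{p/q}B)/(A+tB)^{p/q}$ over $t$, which is equivalent to your maximization of $h(c)$). The paper asserts $P+Q<1$ directly from the normalization $\|u\|_{L^q(B)}=1$ without further comment, so your explicit observation that $\Omega_0-x_0$ and $\Omega_1-x_1$ sit in opposite half-balls of $B$ is a genuine (if small) clarification of a step the authors left implicit.
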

\begin{proof}
We use the same notation as in the proof of Theorem \ref{p_cop}. With $u\in C^{1,\alpha}(\overline B)\cap C^\infty(B\setminus\{0\})$ we now indicate the function achieving $\lambda_{p,q}(B)$, normalized by the conditions
\begin{equation}
\label{normalizzato}
\|u\|_{L^q(B)}=1\qquad \mbox{ and }\qquad u>0.
\end{equation}
By optimality it solves $-\Delta_p u=\lambda_{p,q}(B)\, u^{q-1}$ in $B$, with homogeneous Dirichlet boundary conditions.
As before, we consider the two caps $\Omega_0$ and $\Omega_1$ and take
\[
\varphi(x)=u(x-x_0)\cdot 1_{\Omega_0}(x)-c\, u(x-x_1)\cdot 1_{\Omega_1}(x)\in W^{1,p}(\Omega),
\]
where $c\in\mathbb{R}$ is chosen so to guarantee $\int_\Omega |\varphi|^{q-2}\,\varphi\,dx=0$.
By inserting $\varphi$ in the Rayleigh quotient defining $\mu_{p,q}(\Omega)$ and proceeding as in Theorem \ref{p_cop} we now end up with
\[
\mu_{p,q}(\Omega)< \lambda_{p,q}(B)\,\frac{\displaystyle \int_{\Omega_0} |u(x-x_0)|^q\, dx+c^p \,\int_{\Omega_1} |u(x-x_1)|^q\, dx}{\displaystyle\left(\int_{\Omega_0} |u(x-x_0)|^q\, dx+c^q\,\int_{\Omega_1} |u(x-x_1)|^q\, dx\right)^\frac{p}{q}}.
\]
The term on the right-hand side is of the form
\[
\frac{A+t^\frac{p}{q}\, B}{(A+t\,B)^\frac{p}{q}}.
\]
For $p<q$ the previous expression is maximal for $t=1$. Such a maximal value is given by $(A+B)^{1-p/q}$, we thus get
\begin{equation}
\label{restinooo}
\mu_{p,q}(\Omega)<\lambda_{p,q}(B)\,\left[\int_{\Omega_0} |u(x-x_0)|^q\, dx+\int_{\Omega_1} |u(x-x_1)|^q\, dx\right]^{1-\frac{p}{q}}.
\end{equation}
We have $1-p/q>0$ and the sum of the two terms into square brackets is less than $1$ by \eqref{normalizzato}, thus we can finally infer \eqref{coppolicchio2}.
\end{proof}
As in the case $p=q$, Theorem \ref{p_cop2} implies the following generalization of Corollary \ref{coro:weaksw}.
\begin{coro}
Let $\Omega\subset\mathbb{R}^N$ be an open bounded convex set and $1<p\le q<p^*$. Then we have
\[
\mu_{p,q}(\Omega)<\lambda_{p,q}(B)\,\left(\frac{|B|}{|\Omega|}\right)^{\frac{p}{N}+\frac{p}{q}-1},
\]
where $B$ is any $N-$dimensional ball.
\end{coro}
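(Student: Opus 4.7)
The plan is a one-step reduction that mirrors the derivation of Corollary \ref{coro:weaksw}: I feed the diameter bound from Theorem \ref{p_cop2} into the isodiametric inequality \eqref{isodiametric} to convert the diameter ratio into a volume ratio. Setting $\alpha := p + Np/q - N$, Theorem \ref{p_cop2} already supplies the strict bound
$$\mu_{p,q}(\Omega) < \lambda_{p,q}(B)\,\left(\frac{\mathrm{diam}(B)}{\mathrm{diam}(\Omega)}\right)^{\alpha},$$
and since $\alpha/N = p/N + p/q - 1$, the claim will follow as soon as I may raise \eqref{isodiametric} to the power $\alpha$ without reversing its direction.

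The one point that genuinely needs care — and the sole (minor) obstacle here — is verifying that $\alpha > 0$ on the admissible range $1 < p \le q < p^*$. If $p \ge N$, then $\alpha \ge Np/q > 0$ immediately since $p - N \ge 0$. If $p < N$, then the Sobolev condition $q < p^* = Np/(N-p)$ yields $Np/q > N - p$ and hence $\alpha > 0$. In either case $\alpha > 0$, so $x \mapsto x^\alpha$ is increasing on $(0, \infty)$ and \eqref{isodiametric} gives
$$\left(\frac{\mathrm{diam}(B)}{\mathrm{diam}(\Omega)}\right)^{\alpha} \le \left(\frac{|B|}{|\Omega|}\right)^{\alpha/N}.$$

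Substituting this into the bound of Theorem \ref{p_cop2} and using $\alpha/N = p/N + p/q - 1$, the strict and weak inequalities compose to
$$\mu_{p,q}(\Omega) < \lambda_{p,q}(B)\,\left(\frac{|B|}{|\Omega|}\right)^{\frac{p}{N} + \frac{p}{q} - 1},$$
with strictness inherited from the first step. The endpoint $p = q$ recovers Corollary \ref{coro:weaksw}, so the genuinely new content of the corollary lies in the range $p < q < p^*$.
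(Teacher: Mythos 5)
Your proposal is correct and takes the same approach the paper intends: the paper states without elaboration that the corollary follows from Theorem \ref{p_cop2} together with the isodiametric inequality \eqref{isodiametric}, exactly as you do. You add the one piece of diligence the paper leaves implicit, namely checking that $\alpha = p + Np/q - N > 0$ on the admissible range (using $q < p^*$ when $p < N$), which is indeed needed to raise \eqref{isodiametric} to the power $\alpha$ without reversing it.
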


\begin{oss}[About sharpness]
\label{oss:nonva!}
This time, the estimate \eqref{coppolicchio2} is not sharp. We keep the same notation as in the proof of Theorem \ref{p_cop2} and still consider $q>p$. By adding and subtracting the term $\lambda_{p,q}(B)$ 
on the right-hand side of \eqref{restinooo}, recalling \eqref{normalizzato} and using the concavity of $t\mapsto t^{1-p/q}$, we get
\[
\begin{split}
\mu_{p,q}(\Omega)&<\lambda_{p,q}(B)+\lambda_{p,q}(B)\, \left[\left(\int_{\Omega_0\cup \Omega_1} |u|^q\, dx\right)^{1-\frac{p}{q}}-\left(\int_B |u|^q\, dx\right)^{1-\frac{p}{q}}\right]\\
&\le \lambda_{p,q}(B)+\frac{q-p}{q}\, \lambda_{p,q}(B)\, \left[\int_{\Omega_0\cup \Omega_1} |u|^q\, dx-\int_{B} |u|^q \, dx\right]\\
\end{split}
\]
Since $u$ is radially decreasing, a simple rearrangement argument finally gives
\begin{equation}
\label{restino0}
\mu_{p,q}(\Omega)<\lambda_{p,q}(B)-\frac{q-p}{q}\, \lambda_{p,q}(B)\,\int_{B\setminus T_\Omega} |u|^q\, dx
\end{equation} 
where $T_\Omega$ is the ball centered at the origin, such that $|T_\Omega|=|\Omega_0\cup\Omega_1|$. Now observe that by using the {\it quantitative isodiametric inequality} (see \cite[Theorem 1]{MPP})
\begin{equation}
\label{isodiam}
\begin{split}
|B\setminus T_\Omega|=|B|-|\Omega_0\cup \Omega_1|&=\Big(|B|-|\Omega|\Big)+\Big(|\Omega|-|\Omega_0\cup \Omega_1|\Big)\\
&\ge \frac{|\Omega|}{C_N}\, \mathcal{A}(\Omega)^2+\Big||\Omega|-|\Omega_0\cup\Omega_1|\Big|,
\end{split}
\end{equation}
where $C_N>0$ is a dimensional constant and $\mathcal{A}(\Omega)$ is the {\it Fraenkel asymmetry} of $\Omega$, defined by
\[
\mathcal{A}(\Omega)=\inf\left\{\frac{2\,|\Omega\setminus \Omega_\#|}{|\Omega_\#|}\, :\, \Omega_\# \mbox{ ball with } |\Omega_\#|=|\Omega|\right\}.
\]
Now suppose that there exists a sequence of convex sets $\{\Omega_n\}_{n\in\mathbb{N}}\subset\mathbb{R}^N$ such that
\[
\mathrm{diam}(\Omega_n)=\mathrm{diam}(B)\qquad \mbox{ and }\qquad \lim_{n\to\infty}\mu_{p,q}(\Omega_n)=\lambda_{p,q}(B),
\] 
for $q>p$. Then from \eqref{restino0} one would obtain
\[
\int_{B\setminus T_{\Omega_n}} |u|^q\, dx=0.
\]
Since $u>0$ in $B$, this would imply $|B\setminus T_{\Omega_n}|\to 0$ and thus from \eqref{isodiam}
\begin{equation}
\label{restino}
\lim_{n\to\infty}\mathcal{A}(\Omega_n)=0 \qquad \mbox{ and }\qquad \lim_{n\to\infty} \Big||\Omega_n|-|\Omega_{n,0}\cup\Omega_{n,1}|\Big|=0.
\end{equation}
The first condition in \eqref{restino} implies that $\Omega_n$ converges\footnote{In the $L^1$ sense, i.e. the characteristic functions $\{1_{\Omega_n}\}_{n\in\mathbb{N}}$ converge in $L^1(\mathbb{R}^N)$ to $1_B$.} to a ball, in contrast with the fact that $|\Omega|>|\Omega_0\cup\Omega_1|$ for a ball (see Figure \ref{fig:fig1}).
\begin{figure}
\includegraphics[scale=.25]{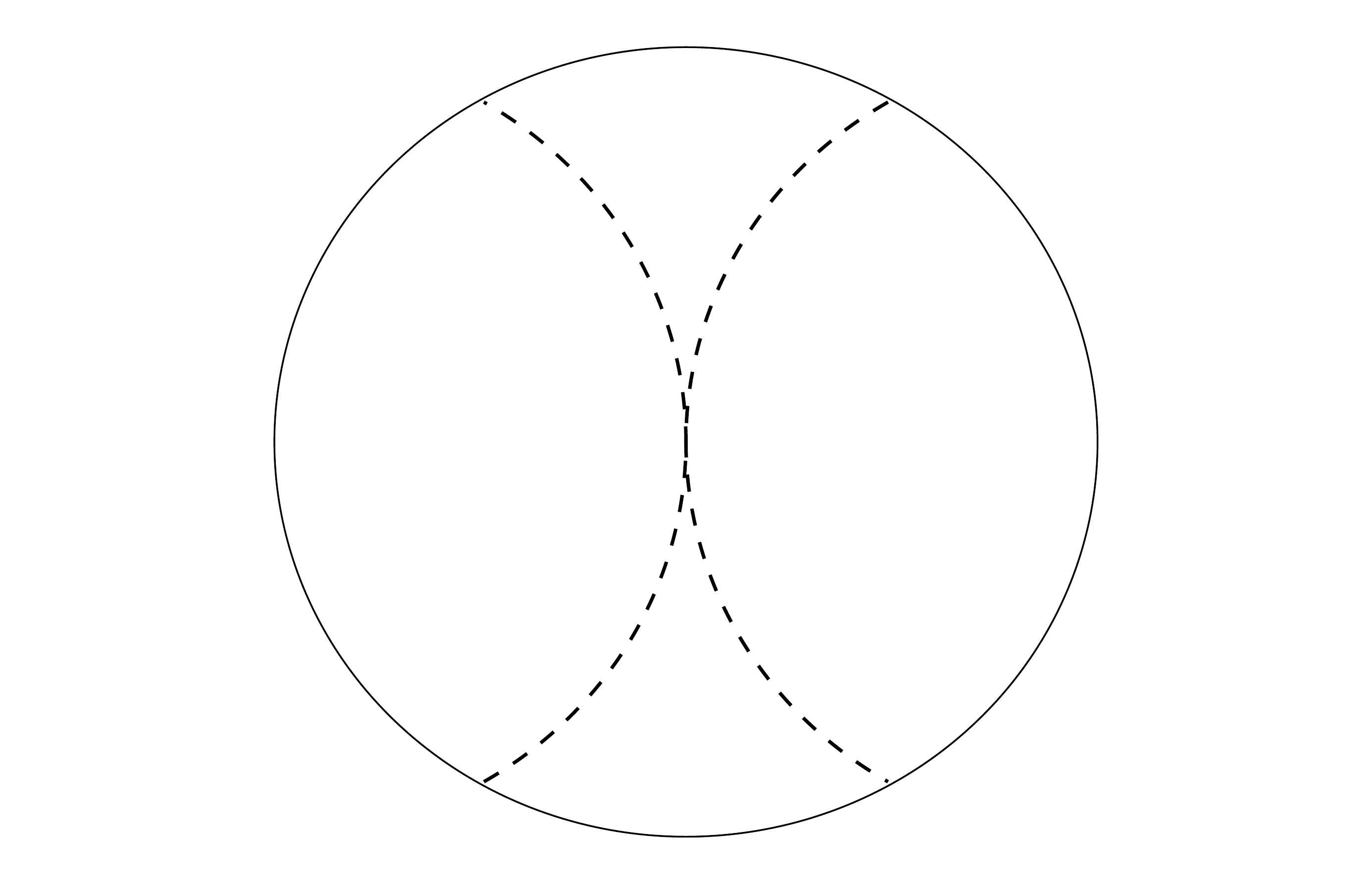}
\caption{The two caps $\Omega_0$ and $\Omega_1$ can not cover the whole ball.}
\label{fig:fig1}
\end{figure}
\end{oss}
As in the case $p=q$, we can then ask whether the following shape optimization problem
\begin{equation}
\label{shape}
\sup\Big\{\mu_{p,q}(\Omega)\, :\, \Omega \mbox{ open and bounded convex set},\, \mathrm{diam}(\Omega)\ge c\Big\},
\end{equation}
admits a solution or not. Quite surprisingly, this time we can infer existence of an optimal shape.
\begin{teo}[Existence of a maximizer]
\label{teo:esistenza}
Let $1<p<q<p^*$, for every $c>0$ problem \eqref{shape} admits a solution, i.e. there exists an open and bounded convex set $\mathcal{K}\subset\mathbb{R}^N$ such that
\[
\mu_{p,q}(\Omega)\, \Big(\mathrm{diam}(\Omega)\Big)^{p+\frac{N\,p}{q}-N}\le \mu_{p,q}(\mathcal{K})\, \Big(\mathrm{diam}(\mathcal{K})\Big)^{p+\frac{N\,p}{q}-N},
\]
for every $\Omega \subset\mathbb{R}^N$ open and bounded convex set.
\end{teo}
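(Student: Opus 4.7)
The plan is to use the direct method of the calculus of variations: extract a convergent subsequence from a maximizing sequence, and identify the limit as the required maximizer $\mathcal{K}$. In contrast to the case $p=q$ treated in Theorem \ref{p_cop}, the essential point here is that a maximizing sequence cannot degenerate, and this is precisely where the hypothesis $p<q$ enters.

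By the scaling identity $\mu_{p,q}(\lambda\,\Omega)=\lambda^{N-p-N\,p/q}\,\mu_{p,q}(\Omega)$, whose exponent is strictly negative for $1<p<q<p^*$, the functional $\mu_{p,q}$ is monotonically decreasing under dilations, so it is enough to look for a maximizer among convex sets with $\mathrm{diam}(\Omega)=c$ exactly. The supremum $\mu^*$ is finite by Theorem \ref{p_cop2} and strictly positive, since for instance a ball of diameter $c$ is admissible.

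Let $\{\Omega_n\}_{n\in\mathbb{N}}$ be a maximizing sequence of convex sets with $\mathrm{diam}(\Omega_n)=c$. Combining Lemma \ref{lm:comparison} with $s=p$ and Theorem \ref{p_cop} gives
\[
\mu_{p,q}(\Omega_n)\le |\Omega_n|^{1-p/q}\,\mu_{p,p}(\Omega_n)\le C(N,p,c)\,|\Omega_n|^{1-p/q}.
\]
Since $1-p/q>0$ and $\mu^*>0$, we obtain $|\Omega_n|\ge\delta>0$. After translating we may assume $\Omega_n\subset B_c(0)$ for all $n$, and Blaschke's selection theorem produces a subsequence converging in the Hausdorff distance to a compact convex set, whose interior $\mathcal{K}$ is an open bounded convex set. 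By the continuity of diameter, volume and inradius on convex bodies under Hausdorff convergence, one has $\mathrm{diam}(\mathcal{K})=c$, $|\mathcal{K}|\ge\delta$, and a uniform lower bound $r(\Omega_n),\,r(\mathcal{K})\ge r_0>0$ on the inradii.

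The delicate final step, which I expect to be the main technical obstacle, is to show that $\mu_{p,q}(\Omega_n)\to \mu_{p,q}(\mathcal{K})$ along the subsequence. The uniform bound $\mathrm{diam}(\Omega_n)/r(\Omega_n)\le c/r_0$ makes the $\Omega_n$ uniformly Lipschitz, hence there exist extension operators $E_n\colon W^{1,p}(\Omega_n)\to W^{1,p}(\mathbb{R}^N)$ with norms bounded independently of $n$, and also bi-Lipschitz homeomorphisms $T_n\colon \mathcal{K}\to\Omega_n$ with $T_n\to\mathrm{id}$ and $|\det DT_n|\to 1$ uniformly. Pushing forward an almost-minimizer of $\mu_{p,q}(\mathcal{K})$ through $T_n$ and correcting the orthogonality condition by subtracting a small constant yields $\limsup_n\mu_{p,q}(\Omega_n)\le\mu_{p,q}(\mathcal{K})$; conversely, extending normalized minimizers $u_n$ of $\mu_{p,q}(\Omega_n)$ to $B_c(0)$ uniformly in $n$, extracting a weak $W^{1,p}$-limit and exploiting the compactness of the embedding $W^{1,p}\hookrightarrow L^q$ (which is where the strict inequality $q<p^*$ is essential), one passes to the limit in the Rayleigh quotient and in the orthogonality constraint, producing an admissible test function for $\mu_{p,q}(\mathcal{K})$ and the reverse inequality. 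This identifies $\mathcal{K}$ as the desired maximizer.
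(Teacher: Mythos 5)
Your overall strategy is the same as the paper's: normalize to $\mathrm{diam}(\Omega_k)=c$, use Lemma \ref{lm:comparison} together with Theorem \ref{p_cop} to get the uniform lower volume bound $|\Omega_k|\ge\delta$, extract a Hausdorff-convergent subsequence to a convex limit $\mathcal{K}$ with the same diameter and positive measure, and then pass to the limit in the Rayleigh quotient. There are, however, two genuine differences.

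First, you prove more than is needed. You aim at full continuity $\mu_{p,q}(\Omega_n)\to\mu_{p,q}(\mathcal{K})$, establishing both $\limsup_n\mu_{p,q}(\Omega_n)\le\mu_{p,q}(\mathcal{K})$ and the reverse inequality. The paper only establishes the upper semicontinuity $\limsup_n\mu_{p,q}(\Omega_n)\le\mu_{p,q}(\mathcal{K})$: since $\mathcal{K}$ is admissible, $\mu_{p,q}(\mathcal{K})\le N_c$ is automatic, and combined with $N_c=\lim_n\mu_{p,q}(\Omega_n)\le\mu_{p,q}(\mathcal{K})$ one is done. Your additional lower-semicontinuity step (extension operators, weak $W^{1,p}$ compactness, Rellich) is a valid route but is extra machinery — it requires the uniform inradius bound and uniformly controlled Sobolev extensions, which the paper avoids entirely.

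Second, and more importantly, there is a gap in your upper-semicontinuity argument as stated. You transport an almost-minimizer of $\mathcal{K}$ to $\Omega_n$ via bi-Lipschitz homeomorphisms $T_n\colon\mathcal{K}\to\Omega_n$ and only require $T_n\to\mathrm{id}$ uniformly and $|\det DT_n|\to 1$ uniformly. These hypotheses do not suffice to conclude that
\[
\int_{\Omega_n}\big|\nabla(u\circ T_n^{-1})\big|^p\,dy=\int_{\mathcal{K}}\big|(DT_n)^{-T}\nabla u\big|^p\,|\det DT_n|\,dx \longrightarrow \int_{\mathcal{K}}|\nabla u|^p\,dx,
\]
because $T_n\to\mathrm{id}$ uniformly and a uniform bi-Lipschitz bound only give $DT_n\rightharpoonup I$ weakly-$*$ in $L^\infty$, and $|\det DT_n|\to 1$ controls a scalar nonlinear quantity, not the full matrix: one can easily have volume-preserving oscillating shears with $T_n\to\mathrm{id}$ in $C^0$ but $DT_n\not\to I$ in any $L^p$. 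To close this you would need to exhibit a specific map (e.g.\ the radial homeomorphism, exploiting convexity so that the Minkowski functionals converge together with their gradients a.e.) for which $DT_n\to I$ almost everywhere with domination. The paper sidesteps this entirely by taking the dilation $\Omega^\varepsilon=(1+\varepsilon)\mathcal{K}$, which contains $\Omega_k$ for $k$ large, transporting the minimizer of $\mathcal{K}$ by the linear map $x\mapsto x/(1+\varepsilon)$ (whose Jacobian is a constant multiple of the identity), restricting to $\Omega_k$, correcting the orthogonality constraint by a bounded constant $t_{\varepsilon,k}$, and then passing to the limit first in $k$ (using weak-$*$ convergence of $1_{\Omega_k}$) and then in $\varepsilon$. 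You should either adopt that cleaner dilation trick or make explicit the pointwise a.e.\ convergence $DT_n\to I$ for your chosen $T_n$.
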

\begin{proof}
By Theorem \ref{p_cop2} we already know that the suprem \eqref{shape} is finite.
Let us call it $N_c$ and take a maximizing sequence of admissible sets $\{\Omega\}_{k\in\mathbb{N}}\subset\mathbb{R}^N$. Of course we can assume
\begin{equation}
\label{dalbasso}
\mu_{p,q}(\Omega_k)\ge \frac{N_c}{2}>0,\qquad \mbox{ for every }k\in\mathbb{N}.
\end{equation}
Since $\mu_{p,q}$ scales like a length to a negative power, we can also assume that
\[
\mathrm{diam}(\Omega_k)=c,\qquad \mbox{ for every }k\in\mathbb{N}.
\]
Finally, we can suppose that there exists a uniform constant $\delta>0$ such that
\begin{equation}
\label{salva}
|\Omega_k|\ge \delta,\qquad \mbox{ for every }k\in\mathbb{N},
\end{equation}
since otherwise we would have that $\mu_{p,q}(\Omega_k)$ goes to zero (see Remark \ref{oss:azzero} below).
\par
Thanks to the bound on the diameters, we can assume that the whole sequence $\{\Omega_k\}_{k\in\mathbb{N}}$ is contained in a common compact set $D\subset\mathbb{R}^N$. Thus the sequence is relatively compact for the complementary Hausdorff topology in $D$: more precisely, there exists an open set $\Omega\subset D$ such that $\Omega_k$ (up to a subsequence) converges in the Hausdorff complementary distance to $\Omega$ (see \cite[Corollaire 2.2.24]{HP}). Moreover, $\Omega$ is still convex and its diameter  equals $c$ (see \cite[Section 2.2.3]{HP}). We also observe that the characteristic functions $\{1_{\Omega_k}\}_{k\in\mathbb{N}}$ converges to $1_\Omega$ strongly\footnote{By convexity, a uniform bound on $\mathrm{diam}(\Omega_k)$ implies a uniform bound on their perimeters and measures. Then it is sufficient to use the compact embedding $BV(D)\hookrightarrow L^1(D)$, where $BV(D)$ is the space of functions with bounded variation.} in $L^1(D)$ and $\ast-$weakly in $L^\infty(D)$.
\par
Without loss of generality, we can assume that $\Omega$ contains the origin, since $\mu_{p,q}(\Omega)$ is not affected by translations.
We are now going to prove that
\begin{equation}
\label{scs}
\limsup_{k\to\infty} \mu_{p,q}(\Omega_k)\le \mu_{p,q}(\Omega).
\end{equation}
At this aim, let us take $u\in W^{1,p}(\Omega)$ a function attaining the infimum in the definition of $\mu_{p,q}(\Omega)>0$. Since $\Omega$ contains the origin, for every $\varepsilon>0$ the set $\Omega^\varepsilon:=(1+\varepsilon)\,\Omega$ is such that
\[
\Omega\Subset \Omega^\varepsilon.
\]
Then by Hausdorff convergence for every $\varepsilon>0$ there exists $k_\varepsilon\in\mathbb{N}$ such that
\[
\Omega_k\subset \Omega^\varepsilon,\qquad \mbox{ for every }k\ge k_\varepsilon.
\]
We also set
\[
u_\varepsilon(x)=u\left(\frac{x}{1+\varepsilon}\right),\qquad x\in \Omega^\varepsilon,
\]
then for every $0<\varepsilon<1$ and every $k\ge k_\varepsilon$, we take $t_{\varepsilon,k}\in\mathbb{R}$ such that
\[
\int_{\Omega_k} |u_\varepsilon-t_{\varepsilon,k}|^q\, dx=\min_{t\in\mathbb{R}}\int_{\Omega_k} |u_\varepsilon-t|^q\, dx,
\]
We claim that the sequence $\{t_{\varepsilon,k}\}_{k\in\mathbb{N}}$ in bounded uniformly in $k$ and $0<\varepsilon<1$, i.e. there exists $C>0$ such that 
\begin{equation}
\label{tt}
|t_{\varepsilon,k}|\le C,\qquad \mbox{ for every }\quad 0<\varepsilon<1\quad \mbox{ and }\quad k\ge k_\varepsilon.
\end{equation}
Indeed, observe that by convexity of the map $\tau\mapsto \tau^q$ and \eqref{salva}, we have
\[
\begin{split}
\int_{\Omega_k} |u_\varepsilon-t_{\varepsilon,k}|^q\, dx&\ge \frac{1}{2^{q-1}}\, |\Omega_k|\, |t_{\varepsilon,k}|^q-\int_{\Omega_k} |u_\varepsilon|^q\, dx\\
&\ge \frac{\delta}{2^{q-1}}\, |t_{\varepsilon,k}|^q-\int_{\Omega_\varepsilon} |u_\varepsilon|^q\, dx\\
&=\frac{\delta}{2^{q-1}}\, |t_{\varepsilon,k}|^q\,-(1+\varepsilon)^{N}\, \int_\Omega |u|^q\, dx,
\end{split}
\]
and on the other hand
\[
\begin{split}
\int_{\Omega_k} |u_\varepsilon-t_{\varepsilon,k}|^q\, dx&\le \frac{\displaystyle\int_{\Omega_k} |\nabla u_\varepsilon|^p\, dx}{\mu_{p,q}(\Omega_k)}\le 2\,\frac{(1+\varepsilon)^{N-p}}{N_c}\, \int_\Omega |\nabla u|^p\, dx,
\end{split}
\]
where we used \eqref{dalbasso} and the very definition fo $u_\varepsilon$.
By keeping the two estimates together, we finally get \eqref{tt}.
\par
Thus we can suppose that $t_{\varepsilon,k}$ converges (up to a subsequence) to $t_\varepsilon\in\mathbb{R}$ as $k$ goes to $\infty$, and $t_\varepsilon$ is in turn uniformly bounded. Then we get
\[
\limsup_{k\to\infty} \mu_{p,q}(\Omega_k)\le \limsup_{k\to\infty}\frac{\displaystyle\int_{\Omega_k} |\nabla u_\varepsilon|^p\, dx}{\displaystyle\left(\int_{\Omega_k} |u_\varepsilon-t_{\varepsilon,k}|^q\,dx \right)^{p/q}}\le \frac{\displaystyle\int_{\Omega} |\nabla u_\varepsilon|^p\, dx}{\displaystyle\left(\int_{\Omega} |u_\varepsilon-t_{\varepsilon}|^q\,dx \right)^{p/q}}
\]
for every $0<\varepsilon<1$, where we also used the $\ast-$weak convergence of the characteristic functions, recalled above. We now observe that 
\[
\lim_{\varepsilon\to 0} \int_\Omega |u_\varepsilon-t_\varepsilon|^q\, dx=\int_\Omega |u-\widetilde t|^q\, dx\ge \min_{t\in\mathbb{R}} \int_\Omega |u-t|^q\, dx,
\]
where $\widetilde t\in\mathbb{R}$ is an accumulation point of the net $\{t_\varepsilon\}_{\varepsilon>0}$, and also
\[
\lim_{\varepsilon\to 0} \|\nabla u_\varepsilon-\nabla u\|_{L^p(\Omega)}=0.
\]
Thus it is now sufficient to take the limit as $\varepsilon$ goes to $0$ in order to get \eqref{scs}, by arbitrariness of $u$. This finally gives that $\Omega$ is a solution of \eqref{shape}.
\end{proof}
\begin{oss}[Lower bounds and minimization]
\label{oss:azzero}
For $p<q$ the quantity $\mu_{p,q}(\Omega)$ can not be bounded {\it from below} in terms of $\mathrm{diam}(\Omega)$ only. In other words, for $q>p$ we have
\[
\inf\Big\{\mu_{p,q}(\Omega)\, :\, \Omega\subset\mathbb{R}^N \mbox{ convex},\ \mathrm{diam}(\Omega)\le c\Big\}=0.
\]
A minimizing sequence is given by any family of convex sets $\{\Omega_k\}_{k\in\mathbb{N}}\subset\mathbb{R}^N$ such that
\begin{equation}
\label{vanishing}
\lim_{k\to\infty} |\Omega_k|=0\qquad \mbox{ and }\qquad \mathrm{diam}(\Omega_k)=c.
\end{equation}
Indeed, by applying Lemma \ref{lm:comparison} we get 
\[
\mu_{p,q}(\Omega)\le |\Omega|^{1-\frac{p}{q}}\, \mu_{p,p}(\Omega).
\]
If we now apply Theorem \ref{p_cop} to the right-hand side, we gain
\[
\mu_{p,q}(\Omega)<\lambda_{p,p}(B)\,|\Omega|^{1-\frac{p}{q}}\,\left(\frac{\mathrm{diam}(B)}{\mathrm{diam}(\Omega)}\right)^{p}.
\]
Thus for a sequence of convex sets verifying \eqref{vanishing}, we get that $\mu_{p,q}(\Omega_k)$ converges to $0$.
\end{oss}

\subsection{The case $p>q$}

In this case, we can show that an upper bound on $\mu_{p,q}$ like that of \eqref{coppolicchio2} can not hold true and actually we have
\[
\sup\{\mu_{p,q}(\Omega)\, :\, \Omega\subset\mathbb{R}^N \mbox{ convex},\ \mathrm{diam}(\Omega)\ge c\}=+\infty.
\] 
Indeed, a maximizing sequence is given by any family of open convex sets $\{\Omega_k\}_{n\in\mathbb{N}}\subset\mathbb{R}^N$ such that
\[
\mathrm{diam}(\Omega_k)=c>0 \qquad \mbox{ and }\qquad \lim_{n\to\infty} |\Omega_k|=0.
\]
Actually, this is a consequence of estimate \eqref{op!} below.
\begin{prop}
Let $1<q<p$ and $\Omega\subset\mathbb{R}^N$ be an open and bounded convex set. Then we have
\begin{equation}
\label{op!}
\mu_{p,q}(\Omega)\ge \left(\frac{\pi_p}{\mathrm{diam}(\Omega)}\right)^p\, |\Omega|^{\frac{q}{p}-1},
\end{equation}
and
\begin{equation}
\label{controcoppolicchio}
\mu_{p,q}(\Omega)\ge \left(\frac{\pi_p}{|B|^{\frac{1}{q}-\frac{1}{p}}\,\mathrm{diam}(B)}\right)^p\,\left(\frac{\mathrm{diam}(B)}{\mathrm{diam}(\Omega)}\right)^{p+\frac{N\,p}{q}-N},
\end{equation}
where the constant $\pi_p$ is given by \eqref{pip}.
\end{prop}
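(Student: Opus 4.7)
The plan is to derive both inequalities by chaining together two ingredients already available in the paper: the Hölder-type comparison Lemma \ref{lm:comparison} and the Payne--Weinberger-type lower bound \eqref{mild4}.

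For \eqref{op!}, I would apply Lemma \ref{lm:comparison} with $s=q$ and with the role of the larger exponent $q$ therein played by $p$ itself (legitimate since $1<q<p<p^*$). This yields
\[
\mu_{p,p}(\Omega)\le |\Omega|^{\frac{p}{q}-1}\,\mu_{p,q}(\Omega),
\]
which, after rearrangement, is a lower bound for $\mu_{p,q}(\Omega)$ in terms of $\mu_{p,p}(\Omega)$ and a (negative) power of $|\Omega|$. Into this one then substitutes the Payne--Weinberger-type estimate \eqref{mild4}, $\mu_{p,p}(\Omega)>(\pi_p/\mathrm{diam}(\Omega))^p$, valid for every bounded convex $\Omega$. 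The combination produces an inequality of the required form \eqref{op!}, with constant $\pi_p^p$ and the diameter and volume appearing in the expected way.

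For \eqref{controcoppolicchio}, I would feed the isodiametric inequality \eqref{isodiametric}, namely $|\Omega|/|B|\le(\mathrm{diam}(\Omega)/\mathrm{diam}(B))^N$, directly into \eqref{op!}. The exponent of $|\Omega|$ appearing in \eqref{op!} is negative (since $1-p/q<0$ for $q<p$), so raising both sides of the isodiametric inequality to that exponent \emph{reverses} its direction, converting $|\Omega|^{\cdots}$ into an expression involving $|B|^{\cdots}$ times a power of $\mathrm{diam}(B)/\mathrm{diam}(\Omega)$. Collecting the resulting powers of $|B|$, $\mathrm{diam}(B)$ and $\mathrm{diam}(\Omega)$ and matching them against $p+Np/q-N$ then gives exactly \eqref{controcoppolicchio}.

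There is no real obstacle in this argument; it is essentially bookkeeping. The only point requiring care is keeping track of the sign of the exponent $1-p/q$, which is strictly negative in the regime $q<p$ and hence dictates the direction in which the isodiametric inequality must be applied when passing from \eqref{op!} to \eqref{controcoppolicchio}. Conceptually the only observation worth underlining is that in the regime $q<p$, contrary to the situation $q>p$ treated earlier in the section, the Hölder comparison of Lemma \ref{lm:comparison} goes \emph{in favor} of the generalized Neumann eigenvalue, so the Payne--Weinberger lower bound for $\mu_{p,p}$ on convex sets transfers to a lower bound for $\mu_{p,q}$ with an additional volume factor.
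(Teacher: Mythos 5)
Your proposal is correct and follows exactly the paper's own route: apply Lemma \ref{lm:comparison} with the lemma's $s$ and $q$ played by the proposition's $q$ and $p$ respectively to compare $\mu_{p,q}$ with $\mu_{p,p}$, then invoke the Payne--Weinberger bound \eqref{mild4}, and finally the isodiametric inequality \eqref{isodiametric}.

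One point you should have flagged explicitly: the exponent you (correctly) obtain from Lemma \ref{lm:comparison}, namely
\[
\mu_{p,p}(\Omega)\le |\Omega|^{\frac{p}{q}-1}\,\mu_{p,q}(\Omega),
\qquad\text{equivalently}\qquad
\mu_{p,q}(\Omega)\ge |\Omega|^{1-\frac{p}{q}}\,\mu_{p,p}(\Omega),
\]
does \emph{not} literally match the exponent $\frac{q}{p}-1$ written in the statement \eqref{op!}, nor the exponent $1-\frac{q}{p}$ displayed in the paper's own proof of the proposition. Both of the latter are misprints: the correct power of $|\Omega|$ in \eqref{op!} is $1-\tfrac{p}{q}$, and it is precisely this value that, after feeding in the isodiametric inequality raised to the negative power $1-\tfrac{p}{q}$ (direction reversed, as you note), reproduces the exponents $p+\tfrac{Np}{q}-N$ and $|B|^{\frac{1}{q}-\frac{1}{p}}$ in \eqref{controcoppolicchio}. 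So your derivation is the right one and silently repairs the typo, but you assert without comment that your bound has ``the required form \eqref{op!}'' and, in the second paragraph, refer to ``the exponent $1-p/q$ appearing in \eqref{op!}'' when that is not what \eqref{op!} says; the discrepancy with the printed statement deserved to be pointed out.
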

\begin{proof}
Again by Lemma \ref{lm:comparison} with $s=p>q$, we get
\[
\mu_{p,p}(\Omega)\le |\Omega|^{1-\frac{q}{p}}\, \mu_{p,q}(\Omega).
\]
By using the lower bound \eqref{mild4}, we can obtain \eqref{op!}.
\vskip.2cm\noindent
Estimate \eqref{controcoppolicchio} is obtained by combining \eqref{op!} with the isodiametric inequality \eqref{isodiametric}.
\end{proof}
The estimate \eqref{controcoppolicchio} is the counterpart of Theorem \ref{p_cop2} for the case $q<p$. Thus this time it is the minimum problem
\[
\inf\Big\{\mu_{p,q}(\Omega)\, :\, \Omega\subset\mathbb{R}^N \mbox{ open and convex},\ \mathrm{diam}(\Omega)\le c\Big\},
\]
that actually makes sense. By suitably adapting the proof of Theorem \ref{teo:esistenza}, one can see that the previous problem admits indeed a solution. We leave the details to the interested reader.

\section{A nodal domain property}
\label{sec:5}

If $u$ is a function achieving the infimum in the problem defining $\mu_{p,q}(\Omega)$, then by {\it nodal domain} we mean every connected component of the (open) sets
\[
\{x\in\Omega\, :\, u(x)>0\}\qquad \mbox{ and }\qquad \{x\in\Omega\,: \, u(x)<0\}.
\]
As a consequence of Theorems \ref{p_cop} and \ref{p_cop2}, in the case $p\ge q$ we have the following result.
\begin{prop}
\label{prop:nodal}
Let $\Omega\subset\mathbb{R}^N$ be an open and bounded convex set and $1<p\le q<p^*$. Then 
\begin{equation}
\label{debole}
\mu_{p,q}(\Omega)<\lambda_{p,q}(\Omega).
\end{equation}
Moreover, every nodal domain of a function achieving $\mu_{p,q}(\Omega)$ has to intersect $\partial\Omega$.
\end{prop}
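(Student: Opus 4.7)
The plan is to derive \eqref{debole} as a direct corollary of the Szeg\H{o}-Weinberger-type bounds of Theorems \ref{p_cop}-\ref{p_cop2} combined with the Faber-Krahn inequality for $\lambda_{p,q}$, and then to exploit \eqref{debole} through a standard restriction argument to rule out interior nodal domains. For the first part, let $\Omega^\#$ denote the $N$-dimensional ball with $|\Omega^\#|=|\Omega|$. Applying Corollary \ref{coro:weaksw} (or its analogue stated immediately after Theorem \ref{p_cop2} when $p<q$) with the choice $B=\Omega^\#$ kills the volume ratio on the right-hand side and produces
\[
\mu_{p,q}(\Omega)<\lambda_{p,q}(\Omega^\#).
\]
The classical Faber-Krahn inequality for $\lambda_{p,q}$, obtained by Schwarz-symmetrizing a minimizer and invoking the P\'olya-Szeg\H{o} principle (rearrangement preserves the $L^q$ norm and lowers the $L^p$ norm of the gradient), gives $\lambda_{p,q}(\Omega^\#)\le\lambda_{p,q}(\Omega)$, and chaining the two bounds proves \eqref{debole}.

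For the nodal-domain property I would argue by contradiction. Let $u\in W^{1,p}(\Omega)$ realize $\mu_{p,q}(\Omega)$ with $\int_\Omega|u|^q\,dx=1$. A Lagrange-multiplier computation, in which the multiplier attached to the orthogonality constraint $\int_\Omega|u|^{q-2}u\,dx=0$ must vanish (as one sees by testing the stationarity condition against the constant function $1$), shows that $u$ solves
\[
-\Delta_p u=\mu_{p,q}(\Omega)\,|u|^{q-2}\,u\quad\text{in }\Omega.
\]
Suppose some nodal domain $N$ satisfies $\overline N\subset\Omega$; after a possible sign change we may assume $u>0$ in $N$ and $u=0$ on $\partial N$. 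By the standard continuity of $p$-Laplacian eigenfunctions the function $v:=u\,1_N$ belongs to $W^{1,p}_0(N)$ and solves the Dirichlet problem $-\Delta_p v=\mu_{p,q}(\Omega)\,v^{q-1}$ on $N$. Testing with $v$ and exploiting $p\le q$ together with $\int_N v^q\,dx\le 1$, the Rayleigh quotient of $v$ in $N$ is controlled by
\[
\frac{\int_N|\nabla v|^p\,dx}{\left(\int_N v^q\,dx\right)^{p/q}}=\mu_{p,q}(\Omega)\left(\int_N v^q\,dx\right)^{1-p/q}\le\mu_{p,q}(\Omega),
\]
so $\lambda_{p,q}(N)\le\mu_{p,q}(\Omega)$. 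Combined with the Dirichlet domain monotonicity $\lambda_{p,q}(\Omega)\le\lambda_{p,q}(N)$, this forces $\lambda_{p,q}(\Omega)\le\mu_{p,q}(\Omega)$, contradicting \eqref{debole}.

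The delicate step, in my view, is the very first one: one must route the inequality through a suitably chosen ball so that the volume (or equivalently diameter) factor inherited from Theorems \ref{p_cop}-\ref{p_cop2} and the subsequent Faber-Krahn comparison point in the correct direction simultaneously. Picking $B$ equivolumetric with $\Omega$ accomplishes exactly this, since the volume ratio becomes trivial and Faber-Krahn compares the right pair of sets. Once \eqref{debole} is secured, the nodal-domain half is a textbook consequence: an interior nodal domain would manufacture, by restriction, a Dirichlet eigenfunction on a proper subdomain whose eigenvalue can be no larger than $\mu_{p,q}(\Omega)$, which is incompatible with \eqref{debole}.
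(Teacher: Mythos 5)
Your proof is correct and follows essentially the same route as the paper's: inequality \eqref{debole} is obtained by combining the Szeg\H{o}--Weinberger-type upper bound of Theorems \ref{p_cop}--\ref{p_cop2} with the Faber--Krahn and isodiametric inequalities (you package the first two into the equivolumetric ball $B=\Omega^\#$, which is the same chain rearranged), and the nodal-domain property is the identical restriction-plus-Faber--Krahn contradiction argument, with your use of $p\le q$ to pass from $\int_N v^q$ to $(\int_N v^q)^{p/q}$ matching the paper's step exactly. The only cosmetic difference is that you invoke plain domain monotonicity $\lambda_{p,q}(\Omega)\le\lambda_{p,q}(N)$ and close the contradiction with the strict inequality \eqref{debole}, whereas the paper uses strict monotonicity of $\lambda_{p,q}$ under inclusion; both are fine.
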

\begin{proof}
The proof of \eqref{debole} immediately follows by combining \eqref{coppolicchio}, the Faber-Krahn inequality
\[
|B|^{\frac{p}{q}+\frac{p}{N}-1}\, \lambda_{p,q}(B)\le |\Omega|^{\frac{p}{q}+\frac{p}{N}-1}\, \lambda_{p,q}(\Omega)
\]
and the isodiametric inequality.
\par
To prove the second assertion, let us argue by contradiction. We take $v$ achieving $\mu_{p,q}(\Omega)$ and we assume that the open set $\{x\in\Omega\, :\, v>0\}$ has a connected component $\omega\Subset \Omega$. We can further suppose that $\|v\|_{L^q(\Omega)}=1$,
then $v\in W^{1,p}_0(\omega)$ and it solves
\[
-\Delta_p v=\mu_{p,q}(\Omega)\, v^{q-1},\qquad \mbox{ in }\omega,
\]
so that
\[
\int_{\omega} |\nabla v|^p=\mu_{p,q}(\Omega)\, \int_\omega |v|^q\, dx\le \mu_{p,q}(\Omega)\, \left(\int_{\omega} |v|^{q}\, dx\right)^\frac{p}{q},
\]
thanks to the fact that $1=\|v\|_{L^q(\Omega)}\ge \|v\|_{L^q(\omega)}$ and $p/q\le 1$. This yields $\lambda_{p,q}(\omega)\le \mu_{p,q}(\Omega)$.
By using the strict monotonicity of $\lambda_{p,q}(\Omega)$ with respect to set inclusion and \eqref{debole}, we then get
\[
\lambda_{p,q}(\Omega)<\lambda_{p,q}(\omega)\le \mu_{p,q}(\Omega)<\lambda_{p,q}(\Omega),
\]
which gives the desired contradiction.
\end{proof}
\begin{oss}
When $p=q=2$, the previous argument to infer that first nontrivial Neumann eigenfunctions can not have a closed nodal line was originally due to Pleijel (see \cite{Pl}). For the Laplacian, inequality \eqref{debole} was conjectured by Kornhauser and Stakgold (see \cite{KS}) and can be obtained (again) as a byproduct of the Szeg\H{o}--Weinberger inequality \eqref{SWscaling}.
\end{oss}

\appendix

\section{A one-dimensional problem}

\label{sec:1d}

Let $d>0$ and let $g:[-d/2,d/2]\to\mathbb{R}^+$ be the Lipschitz continuous function defined by
\[
g(s)=\omega_{N-1}\,\left|\frac{d}{2}-s\right|^{N-1},
\]
as in the proof of Theorem \ref{p_cop}. We consider the variational problem
\begin{equation}
\label{variational}
\eta:=\inf_{v\in W^{1,p}\left(\left(-d/2,d/2\right)\right)\setminus\{0\}}
\left\{\frac{\displaystyle\int_{-d/2}^{d/2} |v'|^p\,g\,\,ds}{\displaystyle \int_{-d/2}^{d/2} |v|^p\,g\,ds}\, :\, \int_{-d/2}^{d/2} |v|^{p-2}\,v\,g\, \,ds=0\right\}.
\end{equation}
In this section, we state and prove some properties of extremals of \eqref{variational}, needed to prove the sharpness of estimate \eqref{coppolicchio}. 
\begin{lm}
\label{lm:apres}
With the notation above, we have $\eta>0$ and problem \eqref{variational} admits a solution.
Any optimizer $f$ is a weak solution of 
\begin{equation}
\label{ODE}
\left\{\begin{array}{lc}
-\big(g\,|f'|^{p-2}\,f'\big)'=\eta \,g\,|f|^{p-2}\,f,& \mbox{ in } (-d/2,d/2),\\ &\\
f'(-d/2)=f'(d/2)=0.&
\end{array}
\right.
\end{equation}
Moreover, $f$ vanishes at $x=0$ only and thus is also a weak solution of
\[
\left\{\begin{array}{lc}
-\big(g\,|f'|^{p-2}\,f'\big)'=\eta \,g\,|f|^{p-2}\,f,& \mbox{ in } (0,d/2),\\ &\\
f(0)=f'(d/2)=0.&
\end{array}
\right.
\] 
\end{lm}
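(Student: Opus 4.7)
I plan to apply the direct method in the weighted Sobolev space associated with $g$. Since $g$ is continuous, strictly positive on $(-d/2,d/2)$, and degenerates only at polynomial rate at the endpoints, the classical one-dimensional weighted Sobolev theory supplies a compact embedding into $L^p_g$. A minimising sequence $\{v_n\}$ normalised by $\int g|v_n|^p\,ds=1$ is bounded in the weighted norm and, up to a subsequence, weakly converges to some $f$; the orthogonality constraint passes to the limit by compactness, while the numerator is weakly lower semicontinuous, so $f$ attains the infimum. For the strict positivity $\eta>0$: if $\eta=0$ then $f'\equiv 0$ a.e.\ on $\{g>0\}=(-d/2,d/2)$, making $f$ constant, and the orthogonality condition then forces $f\equiv 0$, contradicting the normalisation.

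\textbf{Step 2 (Euler--Lagrange equation).} I would pass to the equivalent unconstrained formulation
\[
\eta=\inf_{v}\frac{\displaystyle\int g|v'|^p\,ds}{\displaystyle\min_{t\in\mathbb{R}}\int g|v-t|^p\,ds},
\]
in which the optimal translation at the minimiser $f$ is $t=0$. The envelope property kills the variation in $t$, so first-order perturbations $f+\varepsilon\phi$ produce at once $\int g|f'|^{p-2}f'\phi'\,ds=\eta\int g|f|^{p-2}f\phi\,ds$ for every $\phi\in C^{1}([-d/2,d/2])$; this is the ODE stated in \eqref{ODE}, and the natural flux-free conditions at $\pm d/2$ are automatic because $g$ vanishes there, reducing to the regularity requirement $f'(\pm d/2)=0$.

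\textbf{Step 3 (unique zero at $s=0$; the main obstacle).} The constraint makes $f$ change sign, so $f$ has at least one zero. I would then argue that it has exactly one, in two moves. \emph{(a) At most one zero.} Suppose $f$ had two zeros $s_0<s_1$ in $(-d/2,d/2)$, producing a partition $I_0,I_1,I_2$. Testing the ODE against $\mathbf 1_{I_j}f$ (the boundary contributions vanish because $f$ is zero at $s_0,s_1$ and $g$ is zero at $\pm d/2$) yields $\int_{I_j}g|f'|^p\,ds=\eta\int_{I_j}g|f|^p\,ds$ for each $j$. Hence, for arbitrary scalars $\alpha_j$, the function $v=\sum_{j}\alpha_j\mathbf 1_{I_j}f$ is continuous, lies in $W^{1,p}$, and has Rayleigh quotient exactly $\eta$. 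Picking $\alpha_1=0$ and choosing $\alpha_0,\alpha_2$ so that $v$ satisfies the orthogonality constraint gives a \emph{bona fide} minimiser vanishing identically on the open interval $I_1$; applying its own Euler--Lagrange equation and integrating by parts on $I_0$ and $I_2$, the resulting jump terms $g(s_j)|f'(s_j)|^{p-2}f'(s_j)\phi(s_j)=0$ (valid for every test $\phi$) force $f'(s_0)=f'(s_1)=0$ since $g>0$ at interior points. Together with $f(s_0)=0$ this contradicts uniqueness of the Cauchy problem for the ODE: the energy
\[
E(s)=\tfrac{p-1}{p}|f'(s)|^p+\tfrac{\eta}{p}|f(s)|^p
\]
satisfies $E'(s)=-(g'(s)/g(s))|f'(s)|^p$, and $|g'/g|$ is bounded on every compact subset of $(-d/2,d/2)$, so Gronwall propagates $E(s_0)=0$ to $E\equiv 0$, i.e.\ $f\equiv 0$. \emph{(b) The zero is at $0$.} Call $s^*$ the unique zero. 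On each of $(-d/2,s^*)$ and $(s^*,d/2)$ the constant-sign restriction of $f$ is the first eigenfunction of the mixed (natural at the degenerate endpoint / Dirichlet at $s^*$) problem, so $\eta=\mu^L(s^*)=\mu^R(s^*)$ for the first mixed eigenvalues. A zero-extension comparison, combined with the same Cauchy-uniqueness used above to rule out equality, shows that $\mu^L$ is strictly decreasing and $\mu^R$ strictly increasing in $s$. Finally, the symmetry $g(-s)=g(s)$ gives $\mu^L(-s)=\mu^R(s)$, so $\mu^L(s^*)=\mu^R(s^*)$ forces $s^*=0$. The restricted boundary-value problem on $(0,d/2)$ claimed in the lemma then follows by direct restriction.
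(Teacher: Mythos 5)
Your proof is correct, and it departs from the paper's argument at every stage, most substantially in Step~3.

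\emph{Step 1.} You work directly in a weighted one-dimensional Sobolev space and invoke compactness of $W^{1,p}_g\hookrightarrow L^p_g$. The paper instead lifts the problem to the $N$-dimensional double cone $\Omega_1$ by considering the closed subspace of $W^{1,p}(\Omega_1)$ of functions of $x_1$ alone, so that the degenerate weight never appears and compactness is just Rellich--Kondrachov on a Lipschitz domain. Both give the same conclusion, but the paper's lift is self-contained, whereas your appeal to ``classical weighted Sobolev theory'' is an assertion you would need to justify (the cleanest justification being, in fact, precisely this lift).

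\emph{Step 2.} You pass to the unconstrained Rayleigh quotient $\inf_v\int g|v'|^p/\min_t\int g|v-t|^p$, observe that at the minimizer the optimal shift is $t=0$, and use the envelope theorem to drop the $t$-variation, obtaining the weak Euler--Lagrange equation with no multiplier appearing. The paper instead treats the constraint set $\mathcal{A}$ as a manifold: for $p\ge 2$ it applies Lagrange multipliers and kills the multiplier by testing against $\varphi\equiv 1$; for $1<p<2$ it cites \cite{DGS} and \cite[Lemma 5.8]{BF_nodea}. Your route is slicker in that the multiplier never appears, but like the paper you should acknowledge that the differentiability issues for $1<p<2$ require some care.

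\emph{Step 3.} Here the approaches genuinely diverge. The paper takes any putative zero $x_0\neq 0$, builds an odd symmetrized competitor $\widetilde f$ vanishing on $[-x_0,x_0]$ with Rayleigh quotient at most $\eta$, and then shows a linear-ramp perturbation $f_\varepsilon$ strictly lowers the quotient to $O(\varepsilon)$ in the denominator against $O(\varepsilon^p)$ in the numerator, a contradiction; this simultaneously rules out any zero off the origin. You split the claim: (a) assume two zeros $s_0<s_1$, build a minimizer $v=\alpha_0\mathbf 1_{I_0}f+\alpha_2\mathbf 1_{I_2}f$ vanishing on the middle interval, use continuity of the flux $g|v'|^{p-2}v'$ at $s_0$ to get $f'(s_0)=0$, and then the energy $E=\tfrac{p-1}{p}|f'|^p+\tfrac{\eta}{p}|f|^p$ with $E'=-(g'/g)|f'|^p$ and Gr\"onwall to force $f\equiv 0$; (b) with a single zero $s^*$ established, identify $\eta$ with the first mixed eigenvalue on each side, prove strict monotonicity of $\mu^L,\mu^R$ by zero-extension plus the same Cauchy-uniqueness, and use $g$'s evenness to conclude $s^*=0$. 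This is a more structural argument and, as a by-product, isolates ``at most one interior zero'' as an independent statement, which the paper does not. One small point of care in (a): take $s_0$ and $s_1$ to be the extreme zeros, so that $f$ has constant sign on $I_0$ and $I_2$ and the quantities $\int_{I_j}g|f|^{p-2}f$ are nonzero; this is needed to solve for nonzero $\alpha_0,\alpha_2$ meeting the orthogonality constraint.

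In short, the argument is sound and genuinely different; the paper's Step~3 is more elementary and directly constructive, while yours is more conceptual (flux-matching, Cauchy uniqueness, eigenvalue monotonicity under domain inclusion) and exposes structure the paper leaves implicit.
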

\begin{proof}
In order to prove existence, we define as in the proof of Theorem \ref{p_cop} the convex set 
$\Omega_1\subset\mathbb{R}^N$, obtained by gluing together the basis of two right circular cones of height $d/2$ and radii $d/2$ (see Figure \ref{fig:fig2}). Then we recall that the function $g(s)$ coincides with the measure of the $(N-1)-$dimensional section
\[
\Omega_1\cap \{x_1=s\}.
\]
We consider the Poincar\'e-type constant
\begin{equation}
\label{modificato}
\widehat\eta:=\inf_{v\in W^{1,p}_{\mathbf{e}_1}(\Omega_1)\setminus\{0\}}
\left\{\frac{\displaystyle\int_{\Omega_1} |\nabla v|^p\,\,dx}{\displaystyle \int_{\Omega_1} |v|^p\,dx}\, :\, \int_{\Omega_1} |v|^{p-2}\,v\,dx=0\right\},
\end{equation}
where $W^{1,p}_{\mathbf{e}_1}(\Omega_1)$ is the closed subspace of $W^{1,p}(\Omega_1)$, built up of functions depending on the first variable $x_1$ only. By a standard comptacness argument, it is easy to see that $\widehat\eta>0$ and the infimum is attained. Moreover, by construction, we obtain that $\widehat \eta=\eta$ and the restiction to $x_1-$axis of any minimizer $u$ of \eqref{modificato} minimizes \eqref{variational} as well.
\vskip.2cm\noindent
We now prove the claimed properties of extremals. We begin by noticing that any solution $f$ of \eqref{variational} solves as well
\begin{equation}
\label{variationalbis}
\min_{v\in\mathcal{A}}\left\{\frac{1}{p}\,\int_{-d/2}^{d/2} |v'|^p\,g\,\,ds-\frac{\eta}{p}\,\int_{-d/2}^{d/2} |v|^p\,g\,ds\right\},
\end{equation}
where the set $\mathcal{A}$ is given by 
\[
\mathcal{A}=\left\{v\in W^{1,p}((-d/2,d/2))\, :\, \int_{-d/2}^{d/2} |v|^{p-2}\,v\,g\, \,ds=0\right\}.
\]
For $p\ge 2$, we observe that $\mathcal{A}$ is a $C^1$ manifold, thus we can apply the Lagrange Multipliers Theorem. This yields that $f$ has to satisfy
\[
\begin{split}
\int_{-d/2}^{d/2} |f'|^{p-2}\, f'\, \varphi'\, g\,ds&-\eta\, \int_{-d/2}^{d/2} |f|^{p-2}\, f\, \varphi\,g\, ds\\
&+\mu\,\int_{-d/2}^{d/2} |f|^{p-2}\, \varphi\, g\, ds=0,\quad \mbox{ for every } \varphi\in W^{1,p}((-d/2,d/2)),
\end{split}
\]
for some multiplier $\mu\in\mathbb{R}$. By choosing $\varphi\equiv 1$ and by using that $f\in\mathcal{A}$, we can conclude that $\mu=0$, i.e. $f$ is a weak solution of \eqref{ODE}.
\par
For $1<p<2$ some care is needed, since this time $\mathcal{A}$ is no more smooth. However, by proceeding as in \cite{DGS} (see also \cite[Lemma 5.8]{BF_nodea}), we can prove again that $f$ has to be a weak solution of \eqref{ODE}.
\vskip.2cm\noindent
We now prove that $f(x)=0$ if and only if $x =0$. Since $f$ is admissible in \eqref{variational} and $g$ is positive, $f$ has to vanish somewhere in the interval $(-d/2,d/2)$. Let us suppose that $f(x_0)=0$, with $x_0\in(0,d/2)$. Then we consider the new function
\[
\widetilde f(x)=\left\{\begin{array}{cl}
\displaystyle\int_{x_0}^x |f'(s)|\,ds,& \mbox{ if } x_0<x<d/2,\\
0,& \mbox{ if } -x_0\le x\le x_0\\
-\displaystyle\int_{x_0}^{-x} |f'(s)|\,ds,& \mbox{ if } -d/2<x<-x_0.
\end{array}
\right.
\]
This function is still admissible in \eqref{variational}, thanks to the symmetry of $g$. Moreover, by construction $\widetilde f$ is non-decreasing and such that
\[
\widetilde f'(x)=|f'(x)|,\quad x\in(x_0,d/2)\qquad \mbox{ and }\qquad \widetilde f(x)\ge |f(x)|,\quad x\in (x_0,d/2),
\]
thanks to the fact that $f(x_0)=0$. By using the equation \eqref{ODE} in weak form and testing it against $f\cdot 1_{(x_0,d/2)}$, we get
\[
\int_{x_0}^{d/2} |\widetilde f'|^p\,g\,ds=\int_{x_0}^{d/2} |f'|^{p}\, g\,ds=\eta\, \int_{x_0}^{d/2} |f|^{p}\,g\, ds\le\eta\,\int_{x_0}^{d/2} |\widetilde f|^p\,g\,ds.
\]
By the properties of $\widetilde f$ and eveness of $g$, we thus obtain
\[
\int_{-d/2}^{d/2} |\widetilde f'|^{p}\, g\,ds=2\, \int_{x_0}^{d/2} |\widetilde f'|^p\,g\,ds\le2\,\eta\,\int_{x_0}^{d/2} |\widetilde f|^p\,g\,ds=\eta\,\int_{-d/2}^{d/2} |\widetilde f|^p\,g\,ds.
\]
This shows that $\widetilde f$ still solves \eqref{variational}. In order to reach a contradiction, for all $\varepsilon>0$ we can define the increasing function
\[
f_\varepsilon(x)=\left\{\begin{array}{cl}
\widetilde f(x)+\varepsilon\, x_0,& \mbox{ if } x_0<x<d/2,\\
\varepsilon\, x,& \mbox{ if } -x_0\le x\le x_0,\\
\widetilde f(x)-\varepsilon\, x_0,& \mbox{ if } -d/2<x<-x_0.
\end{array}
\right.
\]
By symmetry $f_\epsilon$ is still an admissible function in \eqref{variational}. By using the convexity inequality
\[
|a+\varepsilon\, b|^p \ge |a|^p+p\,|a|^{p-2}\,a\,\varepsilon\, b,
\]
we have
\[
\frac{\displaystyle\int_{-d/2}^{d/2} |f'_\varepsilon|^p\, g\,ds}{\displaystyle\int_{-d/2}^{d/2} |f_\varepsilon|^p\, g\,ds}=\frac{\displaystyle\int_{0}^{d/2} |f'_\varepsilon|^p g\,ds}{\displaystyle\int_{0}^{d/2} |f_\varepsilon|^p\, g\,ds}\le
\dfrac{\displaystyle\int_{-d/2}^{d/2} |\widetilde f'|^p\, g\,ds+2\,\varepsilon^p\displaystyle\int_{0}^{d/2} g\,ds}{\displaystyle\int_{-d/2}^{d/2} |\widetilde f|^p g\,ds + 2\,\varepsilon\,  p\, x_0\, \int_{0}^{d/2} \widetilde f^{p-1}\,g\,ds}.
\]
For $\varepsilon$ small enough, this contradicts the optimality of $\widetilde f$ in \eqref{variational}, since $x_0>0$ by assumption. Thus we get $x_0=0$ and $f$ vanishes at the origin only.
\end{proof}

\medskip

\end{document}